\documentclass{article}

\usepackage{graphicx}
\usepackage{multirow}
\usepackage{amsmath,amssymb,amsfonts}
\usepackage{amsthm}
\usepackage[title]{appendix}
\usepackage{xcolor}
\usepackage{textcomp}
\usepackage{manyfoot}
\usepackage{bookmark}
\usepackage{booktabs}
\usepackage{listings}
\usepackage{indentfirst}
\usepackage{epstopdf}
\usepackage{mathtools}
\usepackage{url}
\usepackage{xspace}
\usepackage{rotating}
\usepackage{scalefnt}
\usepackage{upgreek}
\usepackage{enumitem}
\usepackage{array}
\usepackage{geometry}
\usepackage{placeins}
\usepackage{caption}

\usepackage[T1]{fontenc}
\usepackage[utf8]{inputenc}

\definecolor{keywordcolor}{rgb}{0.7, 0.1, 0.1}
\definecolor{tacticcolor}{rgb}{0.0, 0.1, 0.3}
\definecolor{commentcolor}{rgb}{0.4, 0.4, 0.4}
\definecolor{stringcolor}{rgb}{0.5, 0.3, 0.2}
\definecolor{symbolcolor}{rgb}{0.1, 0.2, 0.7}
\definecolor{sortcolor}{rgb}{0.1, 0.5, 0.1}
\definecolor{attributecolor}{rgb}{0.7, 0.1, 0.1}
\definecolor{errorcolor}{rgb}{1, 0, 0}

\lstdefinelanguage{lean}{
mathescape=false,
texcl=false,
morekeywords=[1]{
import, prelude, protected, private, noncomputable, definition, meta, renaming,
hiding, exposing, parameter, parameters, begin, conjecture, constant, constants,
hypothesis, lemma, corollary, variable, variables, premise, premises, theory,
print, theorem, proposition, example, open, as, export, override, axiom, axioms,
inductive, with, without, structure, record, universe, universes, alias, help,
precedence, reserve, declare_trace, add_key_equivalence, match, infix, infixl,
infixr, notation, postfix, prefix, instance, vm_eval, check, coercion, end,
this, suppose, using, using_well_founded, namespace, section, fields, attribute,
local, set_option, extends, include, omit, classes, class, instances, coercions,
attributes, raw, replacing, calc, have, show, suffices, by, in, at, let, forall,
Pi, fun, exists, if, dif, then, else, assume, take, obtain, from, aliases,
register_simp_ext, mutual, do, def, run_command, to, abbrev, where
},
morekeywords=[2]{Sort, Type, Prop, Type*, Type₀, Type₁, Type₂, Type₃},
morekeywords=[3]{sorry, admit},
literate=
{α}{{\ensuremath{\mathrm{\upalpha}}}}1
{β}{{\ensuremath{\mathrm{\upbeta}}}}1
{γ}{{\ensuremath{\mathrm{\upgamma}}}}1
{δ}{{\ensuremath{\mathrm{\updelta}}}}1
{ε}{{\ensuremath{\mathrm{\varepsilon}}}}1
{ζ}{{\ensuremath{\mathrm{\zeta}}}}1
{η}{{\ensuremath{\mathrm{\eta}}}}1
{θ}{{\ensuremath{\mathrm{\theta}}}}1
{ι}{{\ensuremath{\mathrm{\iota}}}}1
{κ}{{\ensuremath{\mathrm{\kappa}}}}1
{λ}{{\color{symbolcolor}\ensuremath{\mathrm{\uplambda}}}}1
{μ}{{\ensuremath{\mathrm{\mu}}}}1
{ν}{{\ensuremath{\mathrm{\nu}}}}1
{ξ}{{\ensuremath{\mathrm{\xi}}}}1
{π}{{\ensuremath{\mathrm{\mathnormal{\pi}}}}}1
{ρ}{{\ensuremath{\mathrm{\rho}}}}1
{σ}{{\ensuremath{\mathrm{\sigma}}}}1
{τ}{{\ensuremath{\mathrm{\tau}}}}1
{χ}{{\ensuremath{\mathrm{\chi}}}}1
{ψ}{{\ensuremath{\mathrm{\psi}}}}1
{ω}{{\ensuremath{\mathrm{\omega}}}}1
{υ}{{\ensuremath{\mathrm{\nu}}}}1
{Γ}{{\ensuremath{\mathrm{\Gamma}}}}1
{Δ}{{\ensuremath{\mathrm{\Delta}}}}1
{Θ}{{\ensuremath{\mathrm{\Theta}}}}1
{Λ}{{\ensuremath{\mathrm{\Lambda}}}}1
{Σ'}{{\color{symbolcolor}\ensuremath{\Sigma'}}}2
{Σ}{{\color{symbolcolor}\ensuremath{\Sigma}}}1
{Ξ}{{\ensuremath{\mathrm{\Xi}}}}1
{Ψ}{{\ensuremath{\mathrm{\Psi}}}}1
{Ω}{{\ensuremath{\mathrm{\Omega}}}}1
{Φ}{{\ensuremath{\mathrm{\Phi}}}}1
{Π}{{\color{symbolcolor}\ensuremath{\mathrm{\Uppi}}}}1
{ℵ}{{\ensuremath{\aleph}}}1
{ℕ}{{\ensuremath{\mathbb{N}}}}1
{ℤ}{{\ensuremath{\mathbb{Z}}}}1
{ℝ}{{\ensuremath{\mathbb{R}}}}1
{ℚ}{{\ensuremath{\mathbb{Q}}}}1
{ℂ}{{\ensuremath{\mathbb{C}}}}1
{ℓ}{{\ensuremath{\ell}}}1
{⊤}{{\ensuremath{\top}}}1
{⊥}{{\color{symbolcolor}\ensuremath{\perp}}}1
{∞}{{\color{symbolcolor}\ensuremath{\infty}}}1
{√}{{\color{symbolcolor}\ensuremath{\sqrt}}}1
{≤}{{\color{symbolcolor}\ensuremath{\leq}}}1
{≥}{{\color{symbolcolor}\ensuremath{\geq}}}1
{≠}{{\color{symbolcolor}\ensuremath{\neq}}}1
{≈}{{\color{symbolcolor}\ensuremath{\approx}}}1
{≡}{{\color{symbolcolor}\ensuremath{\equiv}}}1
{≃o}{{\color{symbolcolor}\ensuremath{\simeq_o}}}2
{≃}{{\color{symbolcolor}\ensuremath{\simeq}}}1
{∂}{{\color{symbolcolor}\ensuremath{\partial}}}1
{∆}{{\color{symbolcolor}\ensuremath{\triangle}}}1
{∫}{{\color{symbolcolor}\ensuremath{\int}}}1
{∑}{{\color{symbolcolor}\ensuremath{\Sigma}}}1
{∓}{{\color{symbolcolor}\ensuremath{\mp}}}1
{±}{{\color{symbolcolor}\ensuremath{\pm}}}1
{×}{{\color{symbolcolor}\ensuremath{\times}}}1
{⊕ₗ}{{\color{symbolcolor}\ensuremath{\oplus_{\ell}}}}2
{⊕}{{\color{symbolcolor}\ensuremath{\oplus}}}1
{⊗}{{\color{symbolcolor}\ensuremath{\otimes}}}1
{⊞}{{\color{symbolcolor}\ensuremath{\boxplus}}}1
{⨆}{{\ensuremath{\bigsqcup}}}1
{⨅}{{\color{symbolcolor}\ensuremath{\bigsqcap}}}1
{∇}{{\color{symbolcolor}\ensuremath{\nabla}}}1
{⬝}{{\color{symbolcolor}\ensuremath{\cdot}}}1
{•}{{\color{symbolcolor}\ensuremath{\cdot}}}1
{·}{{\color{symbolcolor}\ensuremath{\cdot}}}1
{∘}{{\color{symbolcolor}\ensuremath{\circ}}}1
{∧}{{\color{symbolcolor}\ensuremath{\wedge}}}1
{∨}{{\color{symbolcolor}\ensuremath{\vee}}}1
{¬}{{\color{symbolcolor}\ensuremath{\neg}}}1
{⊢}{{\color{symbolcolor}\ensuremath{\vdash}}}1
{∀}{{\color{symbolcolor}\ensuremath{\forall}}}1
{∃}{{\color{symbolcolor}\ensuremath{\exists}}}1
{↦}{{\color{symbolcolor}\ensuremath{\mapsto}}}1
{→}{{\color{symbolcolor}\ensuremath{\rightarrow}}}1
{←}{{\color{symbolcolor}\ensuremath{\leftarrow}}}1
{↔}{{\color{symbolcolor}\ensuremath{\leftrightarrow}}}1
{⇒}{{\color{symbolcolor}\ensuremath{\Rightarrow}}}1
{⟹}{{\color{symbolcolor}\ensuremath{\Longrightarrow}}}1
{⇐}{{\color{symbolcolor}\ensuremath{\Leftarrow}}}1
{⟸}{{\color{symbolcolor}\ensuremath{\Longleftarrow}}}1
{⟶}{{\color{symbolcolor}\ensuremath{\longrightarrow}}}1
{⥤}{{\color{symbolcolor}\ensuremath{\Rightarrow}}}1
{∩}{{\color{symbolcolor}\ensuremath{\cap}}}1
{∪}{{\color{symbolcolor}\ensuremath{\cup}}}1
{⊂}{{\color{symbolcolor}\ensuremath{\subseteq}}}1
{⊆}{{\color{symbolcolor}\ensuremath{\subseteq}}}1
{⊄}{{\color{symbolcolor}\ensuremath{\nsubseteq}}}1
{⊈}{{\color{symbolcolor}\ensuremath{\nsubseteq}}}1
{⊃}{{\color{symbolcolor}\ensuremath{\supseteq}}}1
{⊇}{{\color{symbolcolor}\ensuremath{\supseteq}}}1
{⊅}{{\color{symbolcolor}\ensuremath{\nsupseteq}}}1
{⊉}{{\color{symbolcolor}\ensuremath{\nsupseteq}}}1
{∈}{{\color{symbolcolor}\ensuremath{\in}}}1
{∉}{{\color{symbolcolor}\ensuremath{\notin}}}1
{∋}{{\color{symbolcolor}\ensuremath{\ni}}}1
{∌}{{\color{symbolcolor}\ensuremath{\notni}}}1
{∅}{{\color{symbolcolor}\ensuremath{\emptyset}}}1
{∖}{{\color{symbolcolor}\ensuremath{\setminus}}}1
{†}{{\color{symbolcolor}\ensuremath{\dag}}}1
{⋆}{{\ensuremath{\star}}}1
{∥}{{\ensuremath{\|}}}1
{φ}{{\ensuremath{\phi}}}1
{⌞}{{\ensuremath{\llcorner}}}1
{⌟}{{\ensuremath{\lrcorner}}}1
{⦃}{{\ensuremath{\{\!|}}}1
{⦄}{{\ensuremath{|\!\}}}}1
{⟨}{{\ensuremath{\langle}}}1
{⟩}{{\ensuremath{\rangle}}}1
{⟪}{{\ensuremath{\llangle}}}1
{⟫}{{\ensuremath{\rrangle}}}1
{⌊}{{\ensuremath{\lfloor}}}1
{⌋}{{\ensuremath{\rfloor}}}1
{⌈}{{\ensuremath{\lceil}}}1
{⌉}{{\ensuremath{\rceil}}}1
{‹}{{\guilsinglleft}}1
{›}{{\guilsinglright}}1
{｜}{{\ensuremath{\mid}}}1
{₀₀}{{\ensuremath{_{00}}}}2
{₁₁}{{\ensuremath{_{11}}}}2
{₁₂}{{\ensuremath{_{12}}}}2
{₂₁}{{\ensuremath{_{21}}}}2
{₂₂}{{\ensuremath{_{22}}}}2
{₀}{{\ensuremath{_0}}}1
{₁}{{\ensuremath{_1}}}1
{₂}{{\ensuremath{_2}}}1
{₃}{{\ensuremath{_3}}}1
{₄}{{\ensuremath{_4}}}1
{₅}{{\ensuremath{_5}}}1
{₆}{{\ensuremath{_6}}}1
{₇}{{\ensuremath{_7}}}1
{₈}{{\ensuremath{_8}}}1
{₉}{{\ensuremath{_9}}}1
{₊}{{\ensuremath{_+}}}1
{ₐ}{{\ensuremath{_a}}}1
{ᵢ}{{\ensuremath{_i}}}1
{ⱼ}{{\ensuremath{_j}}}1
{ₗ}{{\ensuremath{_l}}}1
{ₙ}{{\ensuremath{_n}}}1
{ₘ}{{\ensuremath{_m}}}1
{ₛ}{{\ensuremath{_s}}}1
{ᵥ}{{\ensuremath{_v}}}1
{ᵀ}{{\ensuremath{^\top}}}1
{ᴴ}{{\ensuremath{^H}}}1
{∣}{{\color{symbolcolor}\ensuremath{\mid}}}1
{ᵒᵖ}{{\color{symbolcolor}\textsuperscript{op}}}2
{ᗮ}{{\ensuremath{^\perp}}}1
{¹}{{\ensuremath{^1}}}1
{ᶠ}{{\ensuremath{^f}}}1
{ˡ}{{\ensuremath{^l}}}1
{ⁱ}{{\ensuremath{^i}}}1
{⁻}{{\ensuremath{^{-}}}}1
{↑}{{\color{symbolcolor}\ensuremath{\uparrow}}}1
{↓}{{\color{symbolcolor}\ensuremath{\downarrow}}}1
{//}{{\color{symbolcolor}//}}2
{<|>}{{\color{symbolcolor}<|>}}3
{...}{{\ensuremath{\ldots}}}3
{\$}{{\color{symbolcolor}\$}}1
{:}{{\color{symbolcolor}:}}1
{|}{{\color{symbolcolor}|}}1
{=}{{\color{symbolcolor}=}}1
{<}{{\color{symbolcolor}<}}1
{>}{{\color{symbolcolor}>}}1
{+}{{\color{symbolcolor}+}}1
{*}{{\color{symbolcolor}*}}1
{`}{{\ensuremath{{}^\backprime}}}1
{'}{{\ensuremath{{}^\prime}}}1,
morecomment=[s]{/-}{-/},
morecomment=[l]{--},
showstringspaces=false,
keepspaces=true,
morestring=[b]{"},
tabsize=3,
extendedchars=true,
inputencoding=utf8,
sensitive=true,
breaklines=true,
breakatwhitespace=true,
lineskip={-1.5pt},
basicstyle={\ttfamily\small},
captionpos=b,
columns=[l]fullflexible,
identifierstyle={\ttfamily\color{black}},
keywordstyle=[1]{\ttfamily\color{keywordcolor}},
keywordstyle=[2]{\ttfamily\color{sortcolor}},
keywordstyle=[3]{\ttfamily\color{errorcolor}},
stringstyle={\ttfamily\color{stringcolor}},
commentstyle={\ttfamily\itshape\color{commentcolor}},
}

\newcommand{\lean}[1]{\lstinline[language=lean, mathescape=true]{#1}}
\newcommand{\repourl}{https://github.com/wl-ma/MatDecompFormal/blob/13540d9fdefff347f8698a2e32e3e4bfdbb384e9}
\newcommand{\mathliburl}{https://github.com/leanprover-community/mathlib4/blob/0a2070811b139d537c0d33a1bbf2a946bb7e60c4}
\newcommand{\mathlib}{\textsf{mathlib}\xspace}
\newcommand{\breaktt}[1]{\begingroup\ttfamily\nolinkurl{#1}\endgroup}

\newenvironment{tablenotes}{\list{}{\setlength{\labelsep}{0.5em}%
\setlength{\labelwidth}{1em}%
\setlength{\leftmargin}{1.5em}%
\setlength{\rightmargin}{0pt}%
\setlength{\topsep}{-6pt}%
\setlength{\itemsep}{2pt}%
\setlength{\partopsep}{0pt}%
\setlength{\listparindent}{0em}%
\setlength{\parsep}{0pt}}%
\item\relax%
}{\endlist}

\newcommand{\tnote}[1]{$^{#1}$}

\theoremstyle{plain}
\newtheorem{theorem}{Theorem}

\theoremstyle{remark}

\theoremstyle{definition}

\begin{document}

\title{A Unified Framework for Formalizing Matrix Decomposition Proofs}

\author{
\begin{tabular}{c}
Wanli Ma\textsuperscript{1,*} \quad
Zichen Wang\textsuperscript{2,*,\textdagger} \quad
Zaiwen Wen\textsuperscript{1}
\\[0.6em]
\small\textsuperscript{1}{\centering Beijing International Center for Mathematical Research, Peking University}
\\[0.35em]
\small\textsuperscript{2}{\centering School of Mathematical Sciences, Peking University}
\\[0.6em]
\footnotesize Emails: \texttt{wlma@pku.edu.cn}; \texttt{zichenwang25@stu.pku.edu.cn}; \texttt{wenzw@pku.edu.cn}.
\end{tabular}
}
\date{}
\maketitle
\begingroup
\renewcommand{\thefootnote}{\fnsymbol{footnote}}
\footnotetext[1]{These authors contributed equally to this work.}
\footnotetext[2]{Corresponding author.}
\endgroup

\begin{abstract}
Existence proofs for many matrix decompositions share a recursive routine: a local transformation prepares the matrix, a slice is selected, a recursive solution is obtained, and the result is lifted and transported back. Formalizing this routine uniformly in dependent type theory is difficult because recursive subproblems may change index types, and reconstruction must preserve structural predicates across block embeddings and reindexings. We develop a Lean~4 framework that separates decomposition schemas, transformations, reduction strategies, measures, lifting, transport, and subtype induction. The framework uses general index types, packages square and rectangular matrices in universe types, and provides a decomposition driver that assembles strategy data into subtype-induction instances. It has been instantiated across PLU, LU, LDL/Cholesky, QR variants, Gauss rank normal form, Hessenberg reductions, Schur variants, normal spectral decomposition, SVD, bidiagonalization, tridiagonalization, UTV, Smith normal form, rational canonical form, and Jordan-type forms at varying levels of statement strength. Across these instances, repeated decomposition proofs are best treated not as separate tasks but as instances of a more general inductive statement whose interface records a certified proof path compatible with the chosen decomposition statement.
\end{abstract}


\section{Introduction}\label{sec:intro}
Matrix decompositions are fundamental in linear algebra and scientific computing. They underlie the solution of linear systems, least-squares problems, eigenvalue computations, and low-rank approximation, and they provide the structural basis for many classical numerical algorithms \cite{stewart1998matrix,trefethen2022numerical,golub2013matrix,higham2002accuracy,demmel1997applied,watkins2002fundamentals}. Lean~4 and its mathematical library \mathlib provide substantial foundations for algebra, analysis, topology, and linear algebra \cite{mathlib2020,avigad2020mathematics,moura2021lean}, making matrix decompositions a natural target for formalization. Despite their different concrete forms, existence proofs for these decompositions often share a compact recursive shape: apply a local normalization, reduce to a smaller subproblem, solve it recursively, and lift the result back through block structure \cite{stewart1998matrix,trefethen2022numerical,golub2013matrix}. Similar proof structures appear in matrix reduction procedures, constructions of normal forms, and related results in linear algebra \cite{stewart1998matrix,golub2013matrix,horn2012matrix,gantmacher1959matrix,lancaster1985theory,aransay2016formalisation,divason2022formalization}. The recurrence of this pattern suggests that the common proof organization itself can be represented as formal structure.

Related formalization projects have already shown that substantial mathematical results can be developed rigorously in proof assistants \cite{geuvers2009proof,nipkow2002isabelle,bertot2004interactive,mahboubi2021mathematical,wiedijk2006seventeen,harrison1996formalized,avigad2024formal,buzzard2022point}. In linear algebra, existing developments include formalizations of matrix echelon forms, Smith normal form in both Isabelle/HOL and Coq, and Jordan normal form with spectral radius theory \cite{aransay2016formalisation,divason2022formalization,cano2016formalized,thiemann2016jordan}. Further work covers QR decomposition \cite{divason2015qr} and the fundamental theorem of linear algebra \cite{divason2017fundamental}. Large-scale projects such as the formalization of the Feit-Thompson theorem \cite{gonthier2013odd} have required building substantial linear algebra libraries as infrastructure. For example, Aransay and Divas\'on formalized the computation of matrix echelon forms in Isabelle/HOL and proved the correctness of the corresponding algorithm \cite{aransay2016formalisation}. Divas\'on and Thiemann later developed a systematic formalization of the Smith normal form, establishing the correctness of the relevant algorithm in a more general algebraic setting \cite{divason2022formalization}. These developments show that classical results involving matrix transformations, reduction procedures, and canonical forms can be treated in contemporary formal systems, while their proof organization remains closely tied to individual targets.

The target-specific character of these developments is reflected in their proof structure: each formalization is organized around a single algorithm, a single normal form, or an individual theorem \cite{aransay2016formalisation,divason2022formalization}. By contrast, several recurring components of existence proofs for matrix decompositions, such as local transformations, reductions to subproblems, recursion across dimensions, and the recovery of the original decomposition statement from a solved subproblem, have not been organized as a common reusable formal framework. The goal of the present work is to elevate these shared proof components into such a framework and to support systematic formalizations of different decomposition instances. The framework targets decomposition proofs whose recursive organization follows a head-tail reduction pattern; iterative or convergence-based algorithms such as QR iteration are outside its current scope.

Within \mathlib itself, substantial matrix-related infrastructure already exists: determinants, elementary matrix operations, block matrices, the Hermitian spectral theorem (\href{\mathliburl/Mathlib/Analysis/Matrix/Spectrum.lean\#L139}{\breaktt{Matrix.IsHermitian.spectral_theorem}}), and basic triangularity predicates. The present framework does not reprove these results. Instead, it builds upon and imports them where needed, while adding the recursive induction machinery and the shared driver interfaces that \mathlib currently does not provide. The framework's decomposition instances are not intended as thin wrappers around existing \mathlib theorems; they supply decomposition-specific schemas, transformations, reductions, and lifting lemmas beyond the existing library. At the same time, the framework is not currently proposed for upstream inclusion in \mathlib. The primary contribution is the unified proof architecture itself, and the question of which components could eventually be upstreamed is left for future work.

This paper develops a formal framework for existence proofs of matrix decompositions and studies it through a growing library of decomposition instances. The current Lean development no longer treats PLU and QR as isolated endpoints. It uses a common decomposition schema over general index types, square and rectangular matrix universes, and a decomposition driver that turns strategy data into subtype-induction instances. PLU, LU, LDL/Cholesky, QR variants, Gauss rank normal form, Hessenberg reductions, Schur variants, normal spectral decomposition, SVD, bidiagonalization, tridiagonalization, UTV (a unitary--triangular--unitary factorization), Smith normal form, rational canonical form, and Jordan-type forms supply different local algebra while reusing this architecture where their recursive shape permits it. These instances differ along two independent axes: the proof route may be explicit recursive, spectral, or bridge-derived, while the target may be unconditional, conditional, implication-shaped, or enriched with structured trace data (Section~\ref{sec:statement-boundaries} and Table~\ref{tab:quantitative}). Such repeated proof patterns are themselves mathematical data: the common recursive routine becomes a reusable interface, while individual decompositions supply the local algebraic data, structural predicates, and reconstruction lemmas.

The framework formalizes decomposition proofs at the level of certified proof paths rather than executable numerical routines. When an instance supplies explicit local step data, the resulting proof object records how the input is prepared, which subproblem is selected, what measure decreases, how the recursive result is lifted, and how the statement is transported back. In spectral, PID, or canonical-form instances, the same interface still organizes the existence proof, although the recorded object is a logical route through the recursive structure rather than a numerical execution trace. A proof ``path'' here means exactly this logical route --- which local transformation was applied, which slice was taken, and how the result was lifted. This representation accommodates pivoting, Householder or Givens transformations, spectral choices, boundary reductions, and canonical-form bridges. Many constructions are intentionally noncomputable, which keeps the interfaces close to the abstractions available in \mathlib and to textbook mathematics. Executable extraction is a separate layer, addressed for instance by CoqEAL \cite{denes2012refinement}, which refines abstract specifications into executable code. The main contributions of the paper are summarized below.
\begin{enumerate}
  \item We isolate and resolve the central dependent-type obstruction: a recursive slice inhabits a matrix type different from the original, while structural predicates must survive block embeddings and reindexings. Our solution combines reusable decomposition schemas, transformations, reduction methods, and strategies over general index types with reduction-based and subtype-driven well-founded induction.
  \item We package square and rectangular matrix problems in dedicated universe types and provide a common decomposition driver. The driver assembles local strategy data, reduction certificates, and lifting lemmas into reusable induction instances, separating the recursive proof mechanism from the decomposition-specific algebra.
  \item We instantiate the architecture across 17 matrix decomposition and normal-form families. The library covers elimination, orthogonal and unitary reductions, rectangular factorizations, and canonical forms under a shared recursive proof architecture, demonstrating reuse across substantially different algebraic settings (Section~\ref{sec:driver-patterns} and Table~\ref{tab:quantitative}).
  \item We characterize the resulting proof objects as certified proof paths governed by the common interfaces. They record the route from a local transformation through a recursive slice and back to the target statement, while clearly distinguishing formal existence certificates from executable numerical solvers.
\end{enumerate}

These contributions concern both proof reuse and theorem design. The common driver packages a recurring recursive argument; the public predicates determine how closely each formal theorem matches its classical mathematical name. We return to this distinction in Section~\ref{sec:statement-boundaries}. The entire development compiles against Lean~4.25.0 and \mathlib at commit \texttt{0a20708} (November 2025). The repository contains no \texttt{sorry} or \texttt{admit}. The accompanying audit manifest lists 25 exported theorems representing the 17 counted families. Running the repository audit script applies \texttt{\#print axioms} to every listed theorem; the reported dependencies are limited to the three standard axioms \texttt{Classical.choice}, \texttt{propext}, and \texttt{Quot.sound}. The repository\footnote{\url{https://github.com/wl-ma/MatDecompFormal}} includes \texttt{lakefile.toml} and \texttt{lean-toolchain} for reproducible builds. All code listings below reference source paths relative to the repository root.

The remainder of the paper is organized as follows. We recall the recurring proof pattern and the main formalization difficulties in Section~\ref{sec:prelim}. The abstract framework is presented in Section~\ref{sec:framework}, and the technical components are developed in Section~\ref{sec:core-tech}. We then organize the instance library by representative theorem families in Section~\ref{sec:driver-patterns}, explain how the common framework produces decomposition theorems in Section~\ref{sec:plu-case}, and discuss statement boundaries in Section~\ref{sec:statement-boundaries}. A quantitative overview appears in Section~\ref{sec:quantitative}. Reuse, proof paths, statement design, and future extensions are discussed together in Section~\ref{sec:discussion}.

\section{Preliminaries and Motivation}
\label{sec:prelim}
We first recall the recursive proof pattern behind the decompositions considered here, and then explain why this pattern is difficult to represent directly in Lean. Two interacting difficulties drive the formalization. Recursive descent changes matrix dimensions and therefore the matrix types on which the theorem is stated, while reconstruction must lift a solution back through block embeddings and transport its structural properties across reindexings. Accordingly, the first part of this section extracts the recurring transformation--reduction--lifting pattern, and the second identifies the type-changing recursion and block reconstruction obligations that a reusable Lean interface must make explicit in its data.

\subsection{Recurring Proof Pattern}
Many classical existence proofs for matrix decompositions have the same recursive organization, even when the algebraic operations involved are quite different \cite{stewart1998matrix,trefethen2022numerical,golub2013matrix,horn2012matrix,higham2002accuracy}. Let $D_n(A)$ denote the statement that a matrix $A\in \mathcal M_n$ admits the desired decomposition. A typical inductive step has the form
\begin{equation}
\label{eq:recursive-step}
A_1=\mathsf{reduction}(T(A))\in\mathcal M_{n-1},
\qquad
D_{n-1}(A_1)\Longrightarrow D_n(T(A))\Longrightarrow D_n(A).
\end{equation}
Here $T$ is a certified local transformation, $\mathsf{reduction}$ is the map extracting the smaller matrix $A_1$, and the two implications record lifting from a recursive solution followed by transport back along the local transformation. The local transformation may be a row operation, an orthogonal reflection, or a degenerate identity step; the reduced matrix may be a trailing submatrix or a Schur complement. Thus the recursive premise concerns the reduced matrix $\mathsf{reduction}(T(A))$ instead of a decomposition of $A$ itself.

\begin{figure}[t]
\centering
\[
\begin{aligned}
A\in \mathcal M_n
&\xrightarrow{\ T\ }
\widetilde A\in \mathcal M_n
\xrightarrow{\ \mathsf{reduction}\ }
A_1\in \mathcal M_{n-1},\\[0.7em]
D_n(A)
&\xleftarrow{\ \mathsf{transport}\ }
D_n(\widetilde A)
\xleftarrow{\ \mathsf{lift}\ }
D_{n-1}(A_1).
\end{aligned}
\]
  \caption{A recursive decomposition proof as a two-level diagram: local transformation and reduction on matrices, followed by lifting and transport on decomposition statements}
  \label{fig:proof-pattern}
\end{figure}

The matrix maps in \eqref{eq:recursive-step} are displayed separately from the logical steps on decomposition statements in Figure~\ref{fig:proof-pattern}. This distinction is reflected later in the separation between schemas, local transformations, reduction methods, recursive construction, and lifting and transport obligations in the formal framework. The PLU existence proof instantiates this pattern through pivoting and row permutations. If the first column is zero, the trailing block is already the smaller problem. Otherwise, a pivot is moved to the upper-left corner by a row permutation, giving
\[
P_0A=
\begin{pmatrix}
a & u\\
\ell & A_{22}
\end{pmatrix},
\qquad
a\ne 0 .
\]
Applying the corresponding unit lower triangular elimination matrix $L_0^{-1}$ gives
\[
L_0^{-1}P_0A=
\begin{pmatrix}
a & u\\
0 & S
\end{pmatrix}.
\]
Here $S=A_{22}-\ell a^{-1}u$ is the Schur complement. The recursive premise is a PLU statement for the Schur complement $S$, namely $P_1S=L_1U_1$ \cite{stewart1998matrix,zhang2005schur}. Embedding $P_1,L_1,U_1$ into the lower-right block and combining them with the preceding permutation and elimination steps yields factors $P_A,L_A,U_A$ with $P_A A=L_AU_A$. The proof is therefore organized around the passage from local pivoting and elimination to a smaller problem and back to the full PLU statement, rather than around the Schur formula alone.

The square real QR proof gives a structurally different instance of the same organization. A Householder reflection $H_1$ sends the first column of $A\in\mathbb{R}^{n\times n}$ to a vector whose entries below the first are zero \cite{householder1958unitary,trefethen2022numerical,golub2013matrix,bjorck1996numerical}, so that
\[
H_1A=
\begin{pmatrix}
\alpha & w^{T}\\
0 & B
\end{pmatrix}
\]
with $B\in\mathbb{R}^{(n-1)\times(n-1)}$. A recursive QR decomposition of $B$ gives $B=Q_1R_1$. The lifting step forms
\[
\widehat Q=
\begin{pmatrix}
1 & 0\\
0 & Q_1
\end{pmatrix},
\qquad
\widehat R=
\begin{pmatrix}
\alpha & w^{T}\\
0 & R_1
\end{pmatrix},
\qquad
H_1A=\widehat Q\,\widehat R.
\]
Transport along the local Householder transformation yields $A=(H_1^{T}\widehat Q)\widehat R$, where $H_1^{T}\widehat Q$ remains orthogonal. Thus QR has different algebraic content from PLU, while following the same recursive proof organization.

The same pattern appears beyond PLU and QR. LU uses a no-pivot variant of the head-tail reduction; Gauss rank normal form and SVD use rectangular reductions; Hessenberg and tridiagonalization use boundary-style recursive invariants; Smith, rational canonical, and Jordan-type results connect recursive drivers with algebraic bridges. The local algebra varies, but the proof obligations repeatedly have the same shape: make the input ready, extract a smaller problem, prove strict descent, lift the recursive result, and transport the statement back to the original matrix. This repetition is not merely a convenience for implementation. It indicates that the common proof routine can itself be stated and proved as a theorem. In the formal development, the general theorem is represented by reduction-based induction, subtype-driven induction, and driver interfaces that accept decomposition-specific data as parameters. The individual instances are then responsible for their schemas, local transformations, sliceability proofs, lifting lemmas, and transport lemmas. This turns a repeated proof script into a reusable mathematical interface.

\subsection{Two Core Difficulties in Formalization}

The reusable argument encounters two formal obstacles that are largely hidden by ordinary matrix notation. First, recursive descent changes the row and column index types, so the smaller matrix is not an element of the same Lean type as the original matrix. Second, reconstruction must embed the recursive factors into the ambient dimensions and transport structural predicates across the resulting equivalences. Both obstacles originate in the representation of matrices as functions indexed by types rather than as untyped rectangular arrays. We therefore begin with the underlying \mathlib definition: it makes the dependence on the row and column types explicit and explains why a change of dimension must be represented by new types, maps, and equivalences before any decomposition-specific algebra can be reused.

\begin{lstlisting}[caption={Matrix type definition in \mathlib}]
def Matrix (m n : Type*) (α : Type*) := m → n → α
\end{lstlisting}

For finite dimensional matrices, an $n \times n$ matrix is typically written as \breaktt{Matrix (Fin n) (Fin n) R}. This design is well suited to formal linear algebra because dimension information is built into the type itself, and many shape constraints are enforced by typing rather than by separate assumptions \cite{mathlib2020,avigad2020mathematics,gonthier2011pointfree}. At the same time, it makes recursive matrix arguments more delicate. Passing from an $n \times n$ matrix to an $(n-1)\times(n-1)$ submatrix is a transition to a different matrix type, not merely a change of notation. Likewise, block constructions, row and column permutations, and transfers between finite index types must be expressed through explicit reindexing maps and equivalences. Mechanized proofs make both the type-changing reduction and the full-dimensional reconstruction explicit, requiring dedicated lemmas rather than ad hoc arguments \cite{mathlib2020,moura2021lean,coquand1988calculus,deMoura2015leanprover,avigad2015theorem}.

\subsubsection{Recursion Across Dimensions}

In standard mathematical notation, an $n\times n$ matrix can be written in block form, an $(n-1)\times(n-1)$ trailing block or a Schur complement can be isolated, and the same theorem can then be applied to the smaller matrix. This is naturally read as an induction on the size parameter. The formal obstruction is that the smaller matrix is not merely a matrix of smaller size; it inhabits a different type. For instance, the current matrix may have type $\text{\texttt{Matrix (Fin n) (Fin n) R}}$, while a trailing block or Schur complement has type $\text{\texttt{Matrix (Fin (n - 1)) (Fin (n - 1)) R}}$. These types are not definitionally identical, so the recursive call cannot be expressed as if it took place in one fixed matrix type. The Lean sketch below makes this change visible.

\begin{lstlisting}[caption={Recursive type change (sketch)}]
def A  : Matrix (Fin n) (Fin n) R := ...
def A1 : Matrix (Fin (n - 1)) (Fin (n - 1)) R := trailingBlock A
-- The recursive theorem is indexed by the smaller dimension.
have h1 : D (n - 1) A1 := ih (n - 1) A1 ...
\end{lstlisting}

Although $A_1$ belongs to the same mathematical family of matrix problems with a smaller size parameter, its formal type is different. In the current framework this phenomenon is treated more generally: the original and sliced matrices may be indexed by arbitrary finite ordered types, not only by consecutive \texttt{Fin} types. A naive induction over matrices of one fixed type cannot capture the recursive pattern of these proofs. The same issue is clearer in rectangular settings. An $m\times n$ matrix may be reduced to an $(m-1)\times(n-1)$ block, or to a submatrix whose row and column index types change independently. Thus the recursive step changes the size parameters and the matrix type to which the theorem is applied. A fixed-dimension theorem cannot simply have the form

\begin{lstlisting}[caption={Fixed-dimension theorem form (sketch)}]
theorem main_fixed : ∀ A : Matrix (Fin n) (Fin n) R, D n A := ...
\end{lstlisting}
and then recurse internally as if the smaller problem had the same type. The recursive call targets a theorem instance at a different type, and in the rectangular case its row and column types may change independently. We address this by packaging the index types, their finite and order structures, and the matrix into universe objects. The resulting common domain supports a single measure on differently indexed problems, so well-founded induction can compare the original matrix with its slice without pretending that they inhabit one fixed matrix type. The universe construction therefore resolves the typing problem while preserving the index information needed later for block reconstruction and structural transport.

\subsubsection{Block Reconstruction}

A recursive decomposition of the smaller matrix does not by itself prove the original statement. The second difficulty is therefore to lift the recursively obtained factors back to the original dimension and to verify that the resulting enlarged matrices satisfy the desired decomposition statement for the original matrix. In standard mathematical presentations, this step is often compressed into a short block computation. A decomposition is first proved for a trailing block or Schur complement. The smaller factors are then inserted into larger block matrices. The remaining entries are filled using the blocks determined by the local transformation step, and the resulting identity is checked. Mechanization expands this compressed step into explicit constructions and proof obligations. The resulting proof script has the following schematic shape.

\begin{lstlisting}[caption={Schematic lifting proof script (sketch)}]
have hsmall : HasDecomposition smallSchema A1 := ih A1 smaller
let F := liftFactors localData hsmall
have hprop : schema.property F := ...
have heq   : schema.equation A_tilde F := ...
have hmain : HasDecomposition schema A := transport localData ⟨F, hprop, heq⟩
\end{lstlisting}

Even at this level of abstraction, reconstruction separates into three obligations. First, the recursively obtained factors must be embedded into matrices of the original dimension. Second, the enlarged factors must satisfy the reconstruction equation for the original matrix. Third, their structural predicates, such as triangularity, orthogonality, and invertibility, must be preserved after lifting. Reindexing is required mainly for the third obligation and sometimes for the equation proof: the smaller subproblem may use an index type different from the original one, especially after row or column permutations, so preservation facts for the lifted factors must be transported across index equivalences. In Lean pseudocode, this leads to steps of the form
\begin{lstlisting}[caption={Reindexing proof steps (sketch)}]
let A' := Matrix.reindex e1 e2 A
have hU  : IsUpperTriangular U := ...
have hU' : IsUpperTriangular (Matrix.reindex e e U) := ...
\end{lstlisting}
Such steps are usually implicit in informal mathematics, but formally they require explicit lemmas showing that the relevant properties are invariant under reindexing or transported in the intended way. An arbitrary equivalence is insufficient for order-sensitive predicates: triangularity, for example, must be transported along an equivalence that preserves the relevant order comparisons. The reconstruction layer must therefore combine block identities with predicate-specific transport results rather than treating a coordinate change as definitional equality. We first isolate the abstract notions of schemas, transformations, reductions, and strategies in Section~\ref{sec:framework}; we then develop the block-lifting, head-tail equivalence, and reindexing components used to discharge these obligations in Section~\ref{sec:core-tech}.

\section{A Unified Formal Framework}
\label{sec:framework}

We now abstract the proof pattern of Figure~\ref{fig:proof-pattern} into the data required from an individual decomposition proof. The data have three roles. A schema fixes the target proposition $D_n(A)$. A local transformation and a reduction method describe the matrix-level move to a smaller problem. Lifting and transport lemmas connect the recursive premise back to the original statement. A measure records the strict descent used later by the induction principle. With these roles separated, the framework does more than package final existence propositions. It records the recursive construction itself: local normalization, slicing to a smaller matrix, a recursive premise for that slice, and reconstruction in the original dimension. Different decompositions may therefore supply different algebraic ingredients while sharing the same abstract proof skeleton. The induction principle and matrix universe are discussed in Section~\ref{sec:core-tech}.

\subsection{Decomposition Goals}

We use an abstract schema for each decomposition problem. Such a schema records a type $F$ of candidate factors, a predicate $\mathsf{Prop}_F$ expressing the structural constraints on those factors, and a relation $\mathsf{Eq}_F(A,f)$ expressing that the factors reconstruct the input matrix. The associated existence statement has the form $\exists f:F,\ \mathsf{Prop}_F(f)\wedge\mathsf{Eq}_F(A,f)$. For example, in a two-factor schema, the factor type may be a pair $(F_1,F_2)$. The property predicate records the structural constraints on the two factors, and the equation relation may specify $\mathsf{Eq}_F(A,(F_1,F_2))$ as $A=F_1F_2$. The following declarations record this schema.
\begin{lstlisting}[caption={\texttt{DecompositionSchema} and \texttt{HasDecomposition} (\href{\repourl/MatDecompFormal/Abstractions/Schema.lean}{\texttt{Abstractions/Schema.lean}})}]
structure DecompositionSchema (ι κ : Type*) (R : Type*) where
  Factors : Type*
  property : Factors → Prop
  equation : Matrix ι κ R → Factors → Prop

def HasDecomposition {ι κ R} (sch : DecompositionSchema ι κ R) (A : Matrix ι κ R) : Prop :=
  ∃ (factors : sch.Factors), sch.property factors ∧ sch.equation A factors
\end{lstlisting}

This is the canonical schema surface used by the current development. It is indexed directly by row and column types rather than by natural-number dimensions, and it does not impose ring assumptions at the schema level. The definition is minimal: it does not assume any particular factor profile or algebraic identity. The schema does not prescribe the number of factors or their algebraic properties; those choices are supplied by each instance through the factor type, the property predicate, and the equation relation. For a concrete instance, the mathematical work is to choose the factor type, the structural predicate, and the reconstruction equation. Once these choices are made, the associated proposition states that there exists a tuple of factors satisfying the chosen structural predicate and reconstruction equation.

\subsection{Local Transformations}

A local transformation is a certified procedure aimed at a specified target condition. It packages a goal predicate on the current problem, a type of transformation parameters, an application map, and a certified search operation. When the goal is not already satisfied, the search operation produces a parameter whose application makes the goal true, expressed as $\neg\mathsf{Goal}(A)\Longrightarrow \mathsf{Goal}(\mathsf{apply}(\mathsf{find}(A),A))$. We also provide composition operations for local proof steps that naturally decompose into certified stages. The associated result states that the chosen transformation places the matrix in the condition expected by the reduction method, whose specific meaning is supplied by the instance. The corresponding interface has the following form.
\begin{lstlisting}[caption={\texttt{Transformation} interface (\href{\repourl/MatDecompFormal/Abstractions/Transformation.lean}{\texttt{Abstractions/Transformation.lean}})}]
structure Transformation (X : Type*) where
  T : Type*
  Goal : X → Prop
  [decGoal : DecidablePred Goal]
  apply : T → X → X
  find : (x : X) → (h : ¬ Goal x) → T
  find_spec : ∀ (x : X) (h : ¬ Goal x), Goal (apply (find x h) x)
\end{lstlisting}

The certificate supplied by \texttt{find} should be read at the level of this interface rather than as an executable numerical search routine. It is invoked only when the target condition is not already satisfied, and \texttt{find\_spec} certifies that applying the chosen parameter reaches the readiness condition required by the next stage. No recursive slice is selected here, and no decomposition witness is lifted back to the original matrix; those responsibilities belong to the reduction method and the instance-specific proof hooks. This separation also permits a local preparation step to be assembled from several certified stages without forcing every decomposition to expose the same parameter type or numerical implementation. Consequently, transformations with different local algebra can satisfy one abstract contract, while the later strategy determines how that contract is connected to descent and reconstruction.

\subsection{Reduction Methods}

A reduction method records when a matrix is ready to be reduced, how to extract the smaller matrix, and how the original matrix context can be reconstructed after replacing the reduced block by another matrix of the same smaller type. This reconstruction is a structural matrix operation; decomposition-specific lifting is supplied separately by the instance. Abstractly, a reduction method consists of a predicate $\mathsf{IsSliceable}(A)$, a slicing map $A\mapsto\mathsf{slice}(A)$ defined under that predicate, and a reconstruction map $(A,h_A,B) \mapsto \mathsf{reconstruct}(A,h_A,B)$, where $B:\mathcal M_{\mathrm{slice}}$. Its consistency law states that replacing the slice by itself reconstructs the original matrix, $\mathsf{reconstruct}(A,h_A,\mathsf{slice}(A,h_A))=A$. The argument \texttt{slice\_sol} is a replacement matrix for the reduced block, not a decomposition proof or a tuple of factors. Concrete reusable reductions, including lower-right submatrix, zero-column, and Schur complement reductions, are discussed in Section~\ref{sec:core-tech}. These data and the reconstruction law are recorded by the following interface.

\begin{lstlisting}[caption={\texttt{ReductionMethod} interface (\href{\repourl/MatDecompFormal/Abstractions/ReductionMethod.lean}{\texttt{Abstractions/ReductionMethod.lean}})}]
structure ReductionMethod (ι κ ιs κs : Type*) (R : Type*) where
  IsSliceable : Matrix ι κ R → Prop
  slice : (A : Matrix ι κ R) → (hA : IsSliceable A) → Matrix ιs κs R
  reconstruct : (A : Matrix ι κ R) → (hA : IsSliceable A) → (slice_sol : Matrix ιs κs R) → Matrix ι κ R
  reconstruct_slice_eq : ∀ (A : Matrix ι κ R) (hA : IsSliceable A), reconstruct A hA (slice A hA) = A
\end{lstlisting}

\subsection{Strategies and Branches}
A strategy connects the local transformation with the reduction method. It records that the transformation goal equals the reduction's sliceability condition, and it equips the current problem and the slice with measures for recursive progress. This compatibility is an equality of predicates, not only an implication. The equality allows the certificate produced by the transformation to be consumed directly by the slice operation, without an additional conversion lemma in every instance. The key progress conditions are shown in \eqref{eq:strategy-progress}: normalization does not increase complexity, while slicing gives the strict descent used for reachability in a form reusable across instances.
\begin{equation}
\label{eq:strategy-progress}
\mu(\mathsf{apply}(t,A)) \le \mu(A),
\qquad
\mu_{\mathrm{slice}}(\mathsf{slice}(B,h_B)) < \mu(B).
\end{equation}
\begin{lstlisting}[caption={\texttt{ReductionStrategy} interface (\href{\repourl/MatDecompFormal/Abstractions/Strategy.lean}{\texttt{Abstractions/Strategy.lean}})}]
structure ReductionStrategy (ι κ ιs κs : Type*) (R : Type*) where
  transform : Transformation (Matrix ι κ R)
  reduction : ReductionMethod ι κ ιs κs R
  goal_is_sliceable : transform.Goal = reduction.IsSliceable
  μ : Matrix ι κ R → ℕ
  μ_slice : Matrix ιs κs R → ℕ
  μ_mono : ∀ (A : Matrix ι κ R) (t : transform.T), μ (transform.apply t A) ≤ μ A
  slice_progress : ∀ (A : Matrix ι κ R) (hA : reduction.IsSliceable A), μ_slice (reduction.slice A hA) < μ A
\end{lstlisting}

From a strategy, we derive a local reachability principle stating that every non-base matrix can be moved, possibly unchanged, into a sliceable form whose recursive slice is strictly smaller. This principle is encapsulated by \href{\repourl/MatDecompFormal/Abstractions/Strategy.lean}{\texttt{ReductionStrategy.mk\_reach}} and supplies the reachability hypothesis required by the induction principle. Branching is handled compositionally. For two reduction methods with the same source and slice dimensions, the reduction-level fallback combinator used here, \texttt{ReductionMethod.try\_else}, forms a left-biased branch whose sliceability predicate is the disjunction $\mathsf{IsSliceable}(A) \equiv \mathsf{IsSliceable}_1(A)\lor\mathsf{IsSliceable}_2(A)$. This supports proof procedures whose reduction step has several admissible branches.

The workflow mirrors Figure~\ref{fig:proof-pattern}. A schema fixes the target proposition, a strategy provides a certified route to a sliceable matrix and a smaller recursive problem, and instance-specific lifting and transport lemmas connect the recursive premise back to the original statement. Once these components are supplied, the local step yields the final existence theorem through the induction principle discussed in Section~\ref{sec:core-tech}. This separation supports reuse because theorem assembly depends on the abstract components above rather than on the specific algebra used to construct them, while the instance remains responsible for the algebraic content of its witnesses and factor equations.

\subsection{The Decomposition Driver}

The current framework also contains a driver layer that packages the recurring proof routine into reusable square and rectangular assembly mechanisms. At this level, a decomposition instance no longer constructs the subtype-induction instance by hand. It supplies a strategy core, proof hooks for transport and lifting, and a base case. The driver converts these data into a universe-level induction instance. Separate square and rectangular constructors retain the difference between one changing index type and two independently changing index types, while presenting the same assembly boundary to the final proof. The declarations below show the data records and the constructors generated from them.
\begin{lstlisting}[caption={Strategy data and driver constructors (\href{\repourl/MatDecompFormal/Framework/DecompositionDriver.lean}{\texttt{Framework/DecompositionDriver.lean}})}]
structure SquareStrategyData (R : Type*) (P : SquareUniverse R → Prop) where
  core : SquareStrategyCore R
  proofData : SquareStrategyProofData R P core

structure RectStrategyData (R : Type*) (P : RectUniverse R → Prop) where
  core : RectStrategyCore R
  proofData : RectStrategyProofData R P core

noncomputable def mkSquareSubtypeInductionInstanceFromStrategy ...
noncomputable def mkRectSubtypeInductionInstanceFromStrategy ...
\end{lstlisting}

The square and rectangular cores specify how slice index types are chosen from the ambient index types, how the local strategy is obtained, and how the measure agrees with the cardinality-based universe measure. The proof data then supplies the two decomposition-specific hooks: transport along the local transformation relation and lifting from the recursive slice. Thus the driver is the code-level form of the general statement described above. It turns repeated proof routines into one reusable theorem interface, while leaving the algebra of pivoting, orthogonal transformations, spectral choices, or canonical-form bridges entirely to the individual instances themselves at each concrete application.

\section{Core Technical Components}
\label{sec:core-tech}

We now describe the technical components that support the framework: matrix universes for cross-dimensional recursion, induction principles, block lifting, reindexing, transport, and reusable reduction components. The universe and induction layers address recursion across changing matrix types by placing differently indexed matrices in a common measured domain. The lifting, reindexing, and transport layers reconstruct full-dimensional statements from reduced subproblems while preserving their structural predicates. The section follows this division: it first develops the universe and induction machinery, then the common reconstruction tools and reduction components that discharge the two formal difficulties isolated above in Section~\ref{sec:prelim} for every decomposition instance.

\subsection{The Matrix Universe}

Recursive matrix arguments change index types, so they cannot be expressed inside one fixed matrix type. We therefore introduce matrix universes whose elements package the row and column index types, the required finite and order structure, and the matrix itself. (The term ``universe'' here refers to a packaging type for parameterized matrix problems, not to the Lean type-theoretic universe hierarchy \texttt{Type~u}.) Thus a theorem can be stated over one type of indexed matrix problems, rather than separately over each concrete matrix type. The package also exposes the cardinality measure needed to compare an ambient problem with a slice whose indices have different Lean types.

This representation is needed because Lean does not identify different presentations of the same finite shape by definitional equality. For example, \texttt{Fin (n+1)} and the block index type \texttt{Fin 1} $\oplus$ \texttt{Fin n} are canonically equivalent but not definitionally equal. Likewise, \texttt{Matrix (Fin (n+1)) (Fin (n+1)) R} is not the same type as a block matrix indexed by \texttt{Fin 1} $\oplus$ \texttt{Fin n}, nor as the smaller trailing matrix type \texttt{Matrix (Fin n) (Fin n) R}. In CIC, these types are not identified definitionally just because they represent the same finite shape in informal mathematics. The matrix universe still requires explicit equivalences. Its purpose is to provide the common domain in which the recursive call, the slice, and the original matrix can be related by explicit maps and proof obligations. The square universe packages one index type for both rows and columns, while the rectangular universe packages separate row and column index types. The framework also isolates positive-dimensional regions as recursive subtypes: in the square case the index cardinality is positive, while in the rectangular case both row and column cardinalities are positive. Zero-dimensional or empty-index objects form the base cases, and positive objects are those on which recursive slicing is meaningful. The corresponding Lean definitions have the following shape.

\begin{lstlisting}[caption={\texttt{RectUniverse} and \texttt{SquareUniverse} (\href{\repourl/MatDecompFormal/Framework/Universe.lean}{\texttt{Framework/Universe.lean}})}]
structure RectUniverse (R : Type*) where
  ι : Type*
  [fintype_ι : Fintype ι]
  [decEq_ι : DecidableEq ι]
  [linOrder_ι : LinearOrder ι]
  κ : Type*
  [fintype_κ : Fintype κ]
  [decEq_κ : DecidableEq κ]
  [linOrder_κ : LinearOrder κ]
  A : Matrix ι κ R

structure SquareUniverse (R : Type*) where
  ι : Type*
  [fintype_ι : Fintype ι]
  [decEq_ι : DecidableEq ι]
  [linOrder_ι : LinearOrder ι]
  A : Matrix ι ι R
\end{lstlisting}

\subsection{Induction Principle}

We use the induction principle through three layers of formal packaging. The first is a reduction-based well-founded induction theorem over an abstract set $X$. It only assumes a base predicate, a reducibility predicate, a reduction map, a lift step, and a transport relation. The second layer is the subtype-driven specialization used in the matrix applications: recursion is performed only on a recursive subtype, such as positive-dimensional square matrices, while the final predicate is stated on the full matrix universe. The third layer is the decomposition driver, which adapts concrete square or rectangular strategy families to the subtype formulation in a uniform way.

The induction principles below are not standard applications of \texttt{WellFounded.fix} from \mathlib. The standard combinator requires that the recursive call target the same type as the current element. In matrix decomposition proofs, the current matrix has type \texttt{Matrix (Fin~n) (Fin~n)~R} while the recursive slice has type \texttt{Matrix (Fin~(n-1)) (Fin~(n-1))~R} --- a different type. Furthermore, after slicing, the recursive result must be lifted through block embeddings that pass through \texttt{Fin~1~$\oplus$~Fin~n}, which is equivalent to but not definitionally equal to \texttt{Fin~(n+1)}. Order-sensitive predicates such as triangularity require order-preserving equivalences for transport, not just bijections. The combination of cross-type recursion, universe packaging, and predicate transport along order-compatible reindexings is what the present induction principles formalize. The following formal statement gives the resulting reduction-based induction principle.
\begin{lstlisting}[caption={Reduction-based induction (\href{\repourl/MatDecompFormal/Framework/Induction.lean}{\texttt{Framework/Induction.lean}})}]
theorem induction_by_reduction
    {rel : X → X → Prop} (hwf : WellFounded rel)
    (BaseSet : X → Prop)
    {r : X → X → Prop} {P : X → Prop}
    (h_trans : Transport r P)
    (IsReducible : X → Prop)
    (decompose : ∀ {x : X}, IsReducible x → X)
    (reconstruct : ∀ {x : X} (hx : IsReducible x), P (decompose hx) → P x)
    (prove_on_base : ∀ {x : X}, BaseSet x → P x)
    (reach_from_non_base : ∀ {x : X}, ¬ BaseSet x → ∃ y, ∃ (hy : IsReducible y), r y x ∧ rel (decompose hy) x) : ∀ (x : X), P x := by
\end{lstlisting}
\begin{theorem}[Reduction-based induction]
Let $X$ be a set equipped with a well-founded strict relation $\prec$. This
relation is the strict descent relation used for induction. Let
$B\subseteq X$ be the base predicate, let
$R\subseteq X$ be the predicate of reducible objects, and let
$\delta:\{y\in X\mid R(y)\}\to X$ be the reduction map, sending a reducible
element to its recursive slice or smaller element. Let $\sim$ be a local
transport relation on $X$, not assumed to be an equivalence, and let
$P:X\to\mathrm{Prop}$ be the target property. Suppose that
\begin{enumerate}
\item $P(x)$ holds for every $x\in B$;
\item for every $x\notin B$, there are $y\in X$ and $h_y:R(y)$ such that
$y\sim x$ and $\delta(y,h_y)\prec x$;
\item for every $y\in X$ and every $h_y:R(y)$,
$P(\delta(y,h_y))$ implies $P(y)$;
\item for all $x,y\in X$, $y\sim x$ and $P(y)$ imply $P(x)$.
\end{enumerate}
Then $P(x)$ holds for every $x\in X$.
\end{theorem}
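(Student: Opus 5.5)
We argue by well-founded induction on $\prec$, i.e.\ the principle underlying \texttt{WellFounded.induction} for the relation \texttt{rel}. Fix $x\in X$ and assume, as the induction hypothesis, that $P(z)$ holds for every $z\prec x$; we must show $P(x)$. Here we only need $\prec$ to be well-founded; that it is strict is not used, and no transitivity or symmetry of $\sim$ is used either.

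We split on whether $x\in B$. If $x\in B$, hypothesis (1) gives $P(x)$ directly. If $x\notin B$, hypothesis (2) provides $y\in X$ and a certificate $h_y:R(y)$ with $y\sim x$ and $\delta(y,h_y)\prec x$. The induction hypothesis applied to $z=\delta(y,h_y)$ gives $P(\delta(y,h_y))$. Hypothesis (3), the reconstruction or lifting step, then yields $P(y)$. Finally, hypothesis (4), the transport step, turns $y\sim x$ and $P(y)$ into $P(x)$. This closes the inductive step, so $P(x)$ holds for all $x\in X$.

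There is no genuine mathematical obstacle. The only point needing care is the formal one: the descent is measured from $\delta(y,h_y)$ to $x$, not to $y$. So the induction hypothesis must be stated with respect to the original element $x$, and the proof must not try to recurse on $y$. The intermediate $y$ need not be smaller than $x$ and may equal $x$ when the transformation step is trivial. The argument avoids this by using the induction hypothesis only at $\delta(y,h_y)$.

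In Lean, the step that needs the most attention is the elimination of the existential in hypothesis (2) while keeping the dependent proof $h_y$, so that $\delta$ is applied to the same certificate in both (2) and (3). Since the goal $P(x)$ is a proposition, \texttt{obtain} handles this without needing choice.
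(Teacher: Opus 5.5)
Your proof is correct and follows essentially the same route as the paper's. Both use well-founded induction on $x$, discharge the base case by hypothesis (1), and in the non-base case apply the induction hypothesis at $\delta(y,h_y)$, then lift to $P(y)$ and transport along $y\sim x$ to obtain $P(x)$.
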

\begin{proof}
We argue by well-founded induction on $x$. If $x\in B$, the base hypothesis
gives $P(x)$. In the non-base case, choose $y$ and $h_y:R(y)$ as in the
reachability hypothesis. Since $\delta(y,h_y)\prec x$, the induction
hypothesis yields $P(\delta(y,h_y))$. The lift hypothesis gives $P(y)$, and
the transport hypothesis, applied to $y\sim x$, gives $P(x)$. This proves the
induction step, and well-foundedness gives the result.
\end{proof}

A particularly important specialization is the subtype-driven induction principle \href{\repourl/MatDecompFormal/Framework/Induction.lean}{\breaktt{induction_on_subtype'}}. It separates the predicate on the full set $X$ from the recursive predicate on a distinguished class $S$, such as the positive-dimensional square matrices. The recursive call is made only for a slice from $S$, whose measure is strictly smaller, while compatibility between the two predicates transfers the result back to $X$. Objects outside the recursive subtype are discharged by the universe-level base condition. This is the subtype version of the reduction-based induction principle above; its mathematical content is stated after the Lean interface.
\begin{lstlisting}[caption={Subtype-driven induction (\href{\repourl/MatDecompFormal/Framework/Induction.lean}{\texttt{Framework/Induction.lean}})}]
theorem induction_on_subtype' (SubX : Type*) (toX : SubX → X)
    (μ : X → α) (relα : α → α → Prop) (hwf : WellFounded relα)
    (P : X → Prop)
    (P_sub : SubX → Prop)
    (P_compat : ∀ (x_sub : SubX), P_sub x_sub ↔ P (toX x_sub))
    (r_sub : SubX → SubX → Prop)
    (IsSliceable_sub : SubX → Prop)
    (slice_sub : ∀ (x_sub : SubX), IsSliceable_sub x_sub → X)
    (transport_sub : ∀ {x_sub y_sub}, r_sub y_sub x_sub → P_sub y_sub → P_sub x_sub)
    (lift_from_slice_sub : ∀ (x_sub : SubX) (hx : IsSliceable_sub x_sub), P (slice_sub x_sub hx) → P_sub x_sub)
    (BaseSet : X → Prop)
    (reach_sub : ∀ (x_sub : SubX), ¬ BaseSet (toX x_sub) → Σ' (y_sub : SubX), Σ' (hy : IsSliceable_sub y_sub), r_sub y_sub x_sub ∧ relα (μ (slice_sub y_sub hy)) (μ (toX x_sub)))
    (base_univ : ∀ (x : X), (∀ (x_sub : SubX), toX x_sub ≠ x) ∨ BaseSet x → P x) : ∀ (x : X), P x := by ...
\end{lstlisting}
\begin{theorem}[Subtype-driven induction]
Let $X$ be a set of elements, and let $S$ be a distinguished class of recursive
elements with inclusion $i:S\to X$. Let $\alpha$ carry a well-founded strict
order $<$, and let $\mu:X\to\alpha$ be a measure. Write $z\prec_\mu x$ when
$\mu(z)<\mu(x)$; this is the strict descent relation used for recursion. Let
$P:X\to\mathrm{Prop}$ be the target property, and let $P_S:S\to\mathrm{Prop}$
be the corresponding formulation on the recursive class, satisfying
$P_S(s)\leftrightarrow P(i(s))$.

Let $B\subseteq X$ be the base or nonrecursive predicate. On $S$, let
$R_S\subseteq S$ be the sliceability predicate, let
$\sigma:\{s\in S\mid R_S(s)\}\to X$ be the slice map, and let $\sim_S$ be the
local transport relation. Assume that
\begin{enumerate}
\item $P(x)$ holds whenever $x\notin \mathrm{im}(i)$ or $x\in B$;
\item if $s\in S$ and $i(s)\notin B$, then there are $t\in S$ and
$h_t:R_S(t)$ such that $t\sim_S s$ and $\sigma(t,h_t)\prec_\mu i(s)$;
\item for every $t\in S$ and every $h_t:R_S(t)$,
$P(\sigma(t,h_t))$ implies $P_S(t)$;
\item for all $s,t\in S$, $t\sim_S s$ and $P_S(t)$ imply $P_S(s)$.
\end{enumerate}
Then $P(x)$ holds for every $x\in X$.
\end{theorem}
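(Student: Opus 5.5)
We reduce the subtype-driven statement to a direct well-founded induction on the measure, essentially reproducing the argument of the reduction-based induction theorem but with the recursion routed through $S$. Since $<$ is well-founded on $\alpha$, the pulled-back relation $z\prec_\mu x :\iff \mu(z)<\mu(x)$ is well-founded on $X$ (it is the inverse image of a well-founded relation under $\mu$). We therefore prove $P(x)$ for all $x\in X$ by well-founded induction along $\prec_\mu$, assuming as induction hypothesis that $P(z)$ holds for every $z$ with $\mu(z)<\mu(x)$.

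Fix $x$. We split into cases using hypothesis~1. If $x\notin\mathrm{im}(i)$ or $x\in B$, then hypothesis~1 gives $P(x)$ directly; this is exactly the shape of the Lean hypothesis \texttt{base\_univ}. Otherwise $x=i(s)$ for some $s\in S$ and $i(s)\notin B$. Hypothesis~2 now supplies $t\in S$ and $h_t:R_S(t)$ with $t\sim_S s$ and $\mu(\sigma(t,h_t))<\mu(i(s))=\mu(x)$. The induction hypothesis applied to $z=\sigma(t,h_t)$ yields $P(\sigma(t,h_t))$. Hypothesis~3 lifts this to $P_S(t)$, hypothesis~4 transports it along $t\sim_S s$ to $P_S(s)$, and the compatibility $P_S(s)\leftrightarrow P(i(s))$ gives $P(i(s))=P(x)$, closing the induction step.

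Alternatively, one could invoke the reduction-based induction theorem with the base predicate $B'=B\cup(X\setminus\mathrm{im}(i))$, reducible elements taken to be the images of sliceable elements of $S$, and the relation $y\sim x$ taken as ``$y=i(t)$, $x=i(s)$ with $t\sim_S s$''. However, this requires the reduction map to be defined on $X$ via a chosen preimage. Since $i$ need not be injective, this is awkward, which is why the direct induction is preferable.

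The only delicate point, and the step I expect to require care, is the case split itself. Membership in $\mathrm{im}(i)$ is not decidable, so we use classical logic: either some $s$ has $i(s)=x$, or none does, and the latter is the first disjunct of \texttt{base\_univ}. In the non-image-free case we must rewrite $x$ as $i(s)$ before applying hypothesis~2, so that its measure comparison $\mu(\sigma(t,h_t))<\mu(i(s))$ matches the induction hypothesis at $x$. In Lean this is handled by obtaining $s$ with $i(s)=x$ and substituting; the $\Sigma'$-valued reachability data poses no issue since we only project its components. Everything else is a routine chaining of hypotheses 3, 4, and compatibility.
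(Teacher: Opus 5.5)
Your argument is correct and follows the paper's proof step for step: well-founded induction along the pulled-back relation $\prec_\mu$, hypothesis~1 for the base or non-image case, then reachability, the induction hypothesis at $\sigma(t,h_t)$, lift, transport along $t\sim_S s$, and compatibility. The paper phrases this as an application of the reduction-based principle, and the non-injectivity of $i$ does not actually block that route: take the reducible elements to be the non-base image points $x=i(s)$ themselves, let the transport relation be equality, and define the reduction map by choosing a preimage $s$ and the reachability data $(t,h_t)$ for that $s$, so that lift, $\sim_S$-transport and compatibility are folded into the single lift step.
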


\begin{proof}
The relation $\prec_\mu$ is well-founded because it is induced by the
well-founded order on $\alpha$. Apply the reduction-based induction principle
with this relation. If $x\notin \mathrm{im}(i)$ or $x\in B$, then $P(x)$
follows from the first hypothesis. It remains to consider $x=i(s)$ with
$s\in S$ and $i(s)\notin B$. Choose $t$ and $h_t:R_S(t)$ from the reachability
hypothesis. Since $\sigma(t,h_t)\prec_\mu i(s)$, the induction hypothesis gives
$P(\sigma(t,h_t))$. The lift hypothesis gives $P_S(t)$, and the transport
hypothesis gives $P_S(s)$. By compatibility of $P_S$ with $P$, this is
equivalent to $P(i(s))$. Hence the reduction-based induction principle proves
$P$ on all of $X$.
\end{proof}

With these layers separated, the instance supplies the local reachability, lifting, and transport lemmas, while the induction principle supplies only the well-founded recursion core. The driver layer adds the final packaging step here. It converts a local strategy family over arbitrary finite ordered index types into square or rectangular slice data over the matrix universes, and it exposes a theorem of the form \texttt{prove\_for\_matrix}. This packaging replaces the earlier need to write each universe-level subtype instance directly and gives all later instances a stable matrix-level entry point independent of their internal slice types, dimensions, factor equations, and algebraic hypotheses in each case.

\begin{lstlisting}[caption={Matrix-level theorem from induction instance (\href{\repourl/MatDecompFormal/Framework/UniverseDecomposition.lean}{\texttt{Framework/UniverseDecomposition.lean}})}]
theorem SquareSubtypeInductionInstance.prove_for_matrix
    (inst : SquareSubtypeInductionInstance R)
    {ι : Type*} [Fintype ι] [DecidableEq ι] [LinearOrder ι]
    (A : Matrix ι ι R) : inst.P (SquareUniverse.ofMatrix A)

theorem RectSubtypeInductionInstance.prove_for_matrix
    (inst : RectSubtypeInductionInstance R)
    {ι κ : Type*} [Fintype ι] [DecidableEq ι] [LinearOrder ι]
    [Fintype κ] [DecidableEq κ] [LinearOrder κ]
    (A : Matrix ι κ R) : inst.P (RectUniverse.ofMatrix A)
\end{lstlisting}

\subsection{Block Lifting}

After a recursive subproblem is solved, its factors must be lifted to a decomposition of the original matrix. The block-lifting lemmas separate common block calculation from decomposition-specific algebra. They operate on block matrices
\[
A =
\begin{pmatrix}
A_{11} & A_{12}\\
A_{21} & A_{22}
\end{pmatrix},
\]
where the recursive factorization is already known for the trailing block or for a derived slice such as a Schur complement. In the three-factor lifting theorems, Schur-type reductions lift a factorization of $A_{22} - A_{21}A_{11}^{-1}A_{12}$ to one of $A$, while zero-column reductions lift a factorization of $A_{22}$ to one of
\[
\begin{pmatrix}
0 & A_{12}\\
0 & A_{22}
\end{pmatrix}.
\]
These principles assume a slice factorization and the relevant stability facts under reindexing, and they produce lifted factors satisfying the full matrix equation. The display below suppresses the concrete block expressions: the instance supplies the factor predicates and proves that the lifted factors satisfy them. For PLU these predicates require permutation, unit lower triangularity, and upper triangularity; for QR they require orthogonality and upper triangularity. The generic principles therefore remove repeated block algebra without themselves proving a PLU or QR lifting theorem. A Schur-type three-factor step has the following schematic form.
\begin{lstlisting}[caption={Schur-type three-factor lifting (sketch)}]
lift_schur_pattern
  inputs:
    A : Matrix (Fin (k + 1)) (Fin (k + 1)) R
    a proof that A₀₀ is invertible
    factors subF₁, subF₂, subF₃ for the Schur slice
    proofs that the factor predicates survive the block reindexings
    equation: subF₁ * (the Schur slice of A) = subF₂ * subF₃
  output:
    lifted factors F₁, F₂, F₃ with F₁ * A = F₂ * F₃
\end{lstlisting}

\subsection{Reindexing and Transport}

Reindexing is used when block arguments replace an ambient finite ordered index type by a head-tail presentation. Both square and rectangular drivers use this operation to identify the recursive slice with a matrix over the tail index type. These uses require transport lemmas for structural properties under reindexing. For predicates depending only on entries, an equivalence of index types is enough. For order-sensitive predicates such as upper and lower triangularity, the equivalence must also preserve order. Triangularity is a predicate on entries together with ordered index comparisons, not merely a predicate on entries. This is why the head-tail equivalences are accompanied by order-compatible variants. The foundational equivalence splits a nonempty finite ordered index type into its head element and its tail subtype; the following Lean definitions record this layer.

\begin{lstlisting}[caption={Head-tail equivalences (\href{\repourl/MatDecompFormal/Framework/HeadTail.lean}{\texttt{Framework/HeadTail.lean}})}]
noncomputable def headTailEquiv [Fintype ι] [LinearOrder ι] [Nonempty ι] : ι ≃ Unit ⊕ {a : ι // a ≠ headElem}

noncomputable def headTailLexEquiv [Fintype ι] [LinearOrder ι] [Nonempty ι] : ι ≃ Unit ⊕ₗ {a : ι // a ≠ headElem}
\end{lstlisting}

The second equivalence maps into the lexicographic sum type $\oplus_\ell$; strict monotonicity is proved separately as \texttt{headTailLexEquiv\_strictMono}. These equivalences underlie block extraction identities. Schematically, the tail-tail submatrix is the bottom-right block of a head-tail reindexing. For order-sensitive predicates, the block equivalence uses the order-compatible variant \texttt{sumToLexEquiv} with a strict monotonicity proof, so that such predicates persist under order-preserving equivalences. The same reindexing layer also transports permutation matrices, diagonal entries, lower triangularity, and unit lower triangularity. Its monotonicity hypothesis transports comparisons such as $i>j$ and prevents an arbitrary permutation from destroying triangular structure; the resulting lemmas therefore connect informal coordinate changes with the type-theoretic requirement that predicate preservation be proved explicitly. Upper triangularity, for example, is stable under the reindexing stated below.

\begin{lstlisting}[caption={Block extraction identity (\href{\repourl/MatDecompFormal/Framework/Reindex.lean}{\texttt{Framework/Reindex.lean}})}]
lemma submatrix_inr_inr_eq_toBlocks₂₂
    (er : ι ≃ ι₁ ⊕ ι₂) (ec : κ ≃ κ₁ ⊕ κ₂) (A : Matrix ι κ R) :
    A.submatrix (fun i => er.symm (Sum.inr i)) (fun j => ec.symm (Sum.inr j)) = (Matrix.reindex er ec A).toBlocks₂₂ := by ...
\end{lstlisting}

\begin{lstlisting}[caption={Triangularity under reindexing (\href{\repourl/MatDecompFormal/Components/Properties/Reindex.lean}{\texttt{Components/Properties/Reindex.lean}})}]
lemma isUpperTriangular_reindex (e : ι ≃ ι') (h_mono : StrictMono e)
    (A : Matrix ι ι R) : IsUpperTriangular A ↔ IsUpperTriangular (A.reindex e e) := by ...
\end{lstlisting}
\subsection{Representative Reduction Mechanisms}

The reusable reduction layer is deliberately smaller than the collection of recursive moves appearing in individual decompositions. We compare three representative cases according to the slice they extract, the context they reconstruct, and the algebra that remains specific to a theorem family. Lower-right submatrix extraction and zero-column reconstruction admit definitions that are independent of the eventual factor equation, so they are packaged as shared components. Schur-complement reduction has the same abstract role in recursive descent, but its elimination coefficients and reconstruction identities remain tied to the surrounding instance in the current development. The comparison therefore separates a common reduction interface from the stronger claim that every concrete recursive move has already been factored into a single reusable implementation.

Among the shared components, \texttt{SubmatrixMethod} is the default mechanism when local normalization allows recursion on the lower-right block $A \mapsto A_{22}$. The interface below is independent of any particular decomposition theorem: it packages the row and column equivalences, the sliceability predicate, extraction of the lower-right block, and reconstruction of the surrounding block context. It deliberately stops before decomposition-specific lifting, so PLU, QR, and rectangular instances can reuse the same slice operation while proving different factor equations and structural predicates without changing its definition. This boundary makes lower-right extraction reusable without requiring the reduction component to know which factors will eventually be reconstructed from the recursive witness.

\begin{lstlisting}[caption={\texttt{SubmatrixMethod} (\href{\repourl/MatDecompFormal/Components/Reductions/Submatrix.lean}{\texttt{Components/Reductions/Submatrix.lean}})}]
noncomputable def SubmatrixMethod
    {ι κ ι₁ ι₂ κ₁ κ₂ R : Type*}
    (er : ι ≃ ι₁ ⊕ ι₂) (ec : κ ≃ κ₁ ⊕ κ₂)
    (IsSliceable_def : Matrix ι κ R → Prop) :
    ReductionMethod ι κ ι₂ κ₂ R where
  IsSliceable := IsSliceable_def
  slice := fun A hA => A.submatrix ... ...
  reconstruct := fun A hA slice_sol => ...
  reconstruct_slice_eq := by ...
\end{lstlisting}

A second shared mechanism handles the degenerate case in which the first column has already vanished. The slice is again the trailing block, but the reconstruction is specialized to the block form displayed below. Unlike a Schur-complement step, no invertible pivot or elimination coefficient is needed; the zero first column is retained while the recursive solution replaces only the lower-right block. This distinction makes the method useful as a fallback branch in proofs such as PLU, where the reduction selected for a matrix depends on whether pivoting is necessary. The subsequent \texttt{ZeroColumnMethod} declaration records this reusable slice and reconstruction operation:
\[
\begin{pmatrix}
0 & A_{12}\\
0 & A_{22}
\end{pmatrix}.
\]
\begin{lstlisting}[caption={\texttt{ZeroColumnMethod} (\href{\repourl/MatDecompFormal/Components/Reductions/ZeroColumn.lean}{\texttt{Components/Reductions/ZeroColumn.lean}})}]
noncomputable def ZeroColumnMethod
    {ι κ ι₁ ι₂ κ₁ κ₂ R : Type*} [Zero R] [Unique κ₁]
    (er : ι ≃ ι₁ ⊕ ι₂) (ec : κ ≃ κ₁ ⊕ κ₂) :
    ReductionMethod ι κ ι₂ κ₂ R where
  IsSliceable := fun A => ...
  slice := fun A hA => ...
  reconstruct := fun A hA slice_sol => ...
  reconstruct_slice_eq := by ...
\end{lstlisting}

The third case marks the boundary of the shared reduction library. For square matrices over a field, a common recursive step replaces the trailing block with the Schur complement $A_{22} - A_{21}A_{11}^{-1}A_{12}$ when the leading entry is invertible \cite{zhang2005schur}. Unlike the two reductions above, this operation is not factored into a standalone reusable component; instead, each instance that uses it defines an instance-specific reduction method. For example, the PLU instance defines \texttt{pluPivotSchurReduction} as a \texttt{ReductionMethod} whose slice computes the Schur complement and whose reconstruction restores the original block context. The same pattern appears in LU. Extracting a generic \texttt{SchurMethod} would require parameterizing over the specific elimination and reconstruction algebra, which varies across instances; the current design therefore keeps Schur-complement reductions local while reusing \texttt{SubmatrixMethod} and \texttt{ZeroColumnMethod}. These examples show which parts of recursive descent can be packaged independently of a decomposition theorem and which remain tied to instance-specific elimination algebra. We next explain how such reduction data is combined with the remaining proof obligations to instantiate the framework.

\subsection{Instantiating the Framework}

To instantiate the framework, an instance supplies a schema, a local transformation, a reduction method, and a measure. It also supplies the connecting proofs, including base cases, reachability, strict decrease, lifting, transport, and reindexing or order-transport lemmas when the structural predicates require them. The schema alone does not prove a decomposition. We use the induction principle and the decomposition driver to assemble these supplied fields into a theorem over the matrix universe. The resulting record makes each proof obligation explicit and separates generic recursion from local algebra. In the square case, the induction instance has the following shape shown below.
\begin{lstlisting}[caption={\texttt{SubtypeInductionInstance} (\href{\repourl/MatDecompFormal/Framework/UniverseDecomposition.lean}{\texttt{Framework/UniverseDecomposition.lean}})}]
structure SubtypeInductionInstance (X SubX : Type*) (toX : SubX → X) where
  μ : X → Nat
  μ_base : Nat
  P : X → Prop
  P_sub : SubX → Prop
  P_compat : ∀ (x_sub : SubX), P_sub x_sub ↔ P (toX x_sub)
  r_sub : SubX → SubX → Prop
  IsSliceable_sub : SubX → Prop
  slice_sub : ∀ (x_sub : SubX), IsSliceable_sub x_sub → X
  transport_sub : ∀ {x_sub y_sub}, r_sub y_sub x_sub → P_sub y_sub → P_sub x_sub
  lift_from_slice_sub : ∀ (x_sub : SubX) (hx : IsSliceable_sub x_sub), P (slice_sub x_sub hx) → P_sub x_sub
  reach_sub : ∀ (x_sub : SubX), μ (toX x_sub) > μ_base → Σ' (y_sub : SubX), Σ' (hy : IsSliceable_sub y_sub), r_sub y_sub x_sub ∧ μ (slice_sub y_sub hy) < μ (toX x_sub)
  base_univ : ∀ (x : X), (∀ (x_sub : SubX), toX x_sub ≠ x) ∨ μ x ≤ μ_base → P x
\end{lstlisting}

The theorem over the matrix universe is obtained by a single application of the subtype-induction principle. Once the instance-specific proof ingredients have been supplied through this interface, the subtype-induction principle yields the theorem $\forall x:X,\ P(x)$. At this point, decomposition-specific algebra has been separated from the well-founded recursive argument. The instance proves the decomposition-specific lemmas, the required fields are recorded in the abstract interface, and the recursive argument over the matrix universe is supplied by the induction principle. A matrix-level wrapper then specializes the universe theorem uniformly to arbitrary finite ordered row and column index types in the public statement.

\section{Representative Driver Patterns}
\label{sec:driver-patterns}

The instance library is most informative when organized by the interface pattern through which each theorem enters the subtype-induction driver, rather than as a catalogue of final formulas. This section identifies which parts the common driver handles, which remain local to each decomposition family, and where statement strength changes the meaning of the final theorem. Figure~\ref{fig:induction-tree} summarizes this assembly from top to bottom. The universe layer records the ambient type $X$, recursive subtype $\mathit{SubX}$, measure $\mu$, and structural tags; the target layer supplies the predicate $P:X\to\mathsf{Prop}$ and subtype compatibility; the strategy layer supplies local steps, slices, reachability, and descent; and the proof layer supplies base, lift, and transport obligations. Driver assembly produces a \emph{Subtype Induction Instance}, from which \texttt{prove\_for\_matrix} exposes a matrix-level theorem. Section~\ref{sec:plu-case} traces this route in detail, while the present section compares the resulting theorem families, their factors and equations, and their statement boundaries.

\begin{figure}[t]
\centering
\includegraphics[width=\textwidth]{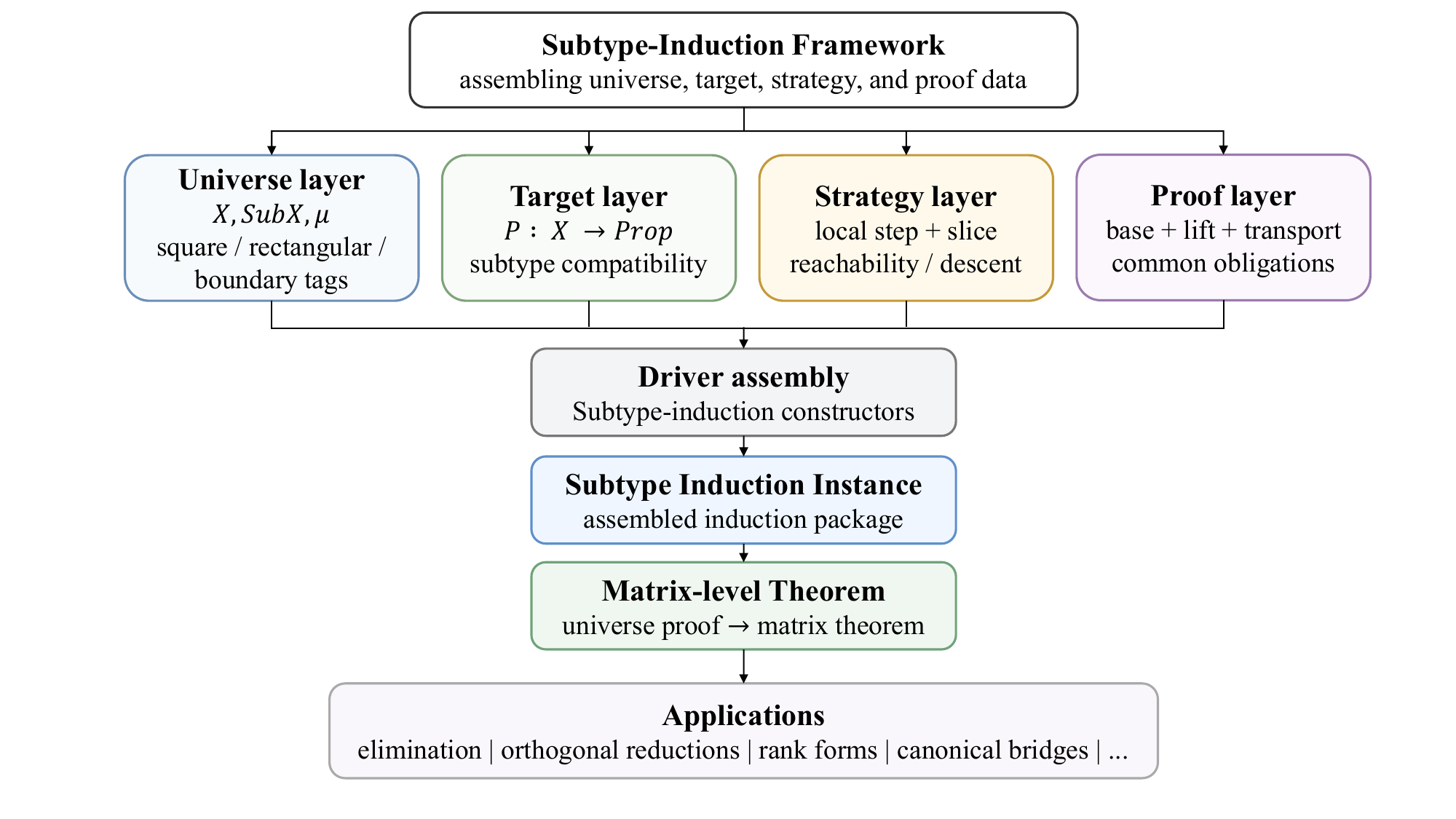}
\caption{\emph{Subtype-Induction Framework}. Universe, target, strategy, and proof data are assembled through the driver into a subtype-induction instance. The resulting universe proof is converted into matrix-level theorems, with applications including elimination, orthogonal reductions, rank forms, and canonical bridges}
\label{fig:induction-tree}
\end{figure}

\subsection{Square Elimination and Triangularization}

The square elimination branch is the clearest place where the common framework meets textbook head-tail recursion. A local transformation prepares the first row and column, a tail or Schur-type slice is extracted, the recursive theorem is applied to a smaller square matrix, and the resulting witness is lifted back through block structure. What changes from theorem to theorem is not the presence of a square recursive pattern, but the mathematical claim expressed at the end of that pattern. PLU and LU supply different local algebra to this same skeleton. PLU branches between a zero-column case and a pivot/Schur branch, while LU uses a no-pivot readiness predicate and hides that internal predicate behind public theorems such as \texttt{exists\_lu}. The square driver is therefore reused at two levels: first for unconditional branching statements such as PLU, and second for conditional square theorems whose public assumptions are proved equivalent to the internal recursive readiness predicate.
\begin{lstlisting}[caption={LU schema and existence theorem (\href{\repourl/MatDecompFormal/Instances/LU/Details.lean}{\texttt{Instances/LU/Details.lean}}; \href{\repourl/MatDecompFormal/Instances/LU/Existence.lean}{\texttt{Instances/LU/Existence.lean}})}]
def LU_Schema : DecompositionSchema ι ι R where
  Factors := Matrix ι ι R × Matrix ι ι R
  property := fun (L, U) => IsUnitLowerTriangular L ∧ IsUpperTriangular U
  equation := fun A (L, U) => A = L * U

theorem exists_lu (A : Matrix ι ι R) (hA : HasNoZeroLUPivots A) : HasLU A
\end{lstlisting}

This listing shows the first kind of difference that matters in the square branch: the theorem surface itself. LU proves a conditional factorization theorem $A=LU$ over a field, under a no-pivot readiness hypothesis. PLU proves a left-permuted factorization $P A=L U$ over a division ring, without that public condition. Schur triangularization \cite{schur1909charakteristischen,horn2012matrix} has two public surfaces: \texttt{HasSchur} states algebraic triangularization through an invertible similarity, while \texttt{HasUnitarySchur} states the complex unitary version with the inverse written as a conjugate transpose. Normal spectral results use implication-shaped predicates whose conclusion is a unitary diagonalization only under a normality hypothesis. Thus the same square recursive environment supports several genuinely different kinds of theorem before one even asks how the common framework assembles them.

The same square route also supports statements that are not elimination theorems in the narrow sense. For a positive-definite matrix over the reals (formally, an \texttt{RCLike} field with trivial star), the LDL development proves a strengthened LDL target with a unit lower-triangular factor and a positive diagonal factor \cite{golub2013matrix}. The Cholesky theorem is derived from this strengthened LDL theorem and states, under the same positive-definiteness hypothesis, the standard positive-diagonal Cholesky target $A=C C^{T}$ with $C$ lower triangular and positive on the diagonal. This derivation reuses the recursive work already performed for LDL rather than introducing a second Cholesky-specific induction.

This LDL/Cholesky development is a useful point of comparison with \mathlib, which already proves an LDL factorization \href{\mathliburl/Mathlib/LinearAlgebra/Matrix/LDL.lean\#L111}{\texttt{LDL.lower\_conj\_diag}} via Gram--Schmidt orthogonalization: for a positive-definite $S$ over an \texttt{RCLike} field it gives $S=L D L^{H}$ with $L$ lower triangular and $D$ diagonal. The present LDL theorem proves the same equational shape through the recursive head-tail driver rather than Gram--Schmidt, and additionally derives the canonical positive-diagonal Cholesky factorization $A=C C^{T}$, which \mathlib does not state. The two developments are complementary rather than competing: the \mathlib proof is more general in scalar scope, covering complex Hermitian matrices, whereas the present one is currently restricted to the real, trivial-star setting but yields the stronger Cholesky surface and arises as one instance of the same driver that produces the other 16 families. Relaxing the trivial-star restriction to recover the complex case is a natural extension. These examples show that the theorem surface may be a direct recursive factorization, a bridge from a stronger intermediate theorem, an implication-shaped spectral statement, or a similarity statement, while the driver still sees a universe-level predicate and proof hooks.

\subsection{Orthogonal and Unitary Reductions}

The orthogonal and unitary branch keeps the same recursive organization but changes the transport equation. QR is a one-sided factorization theorem: the local step clears the first column, the slice is a trailing square block, and the recursive result is lifted to a statement $A = QR$. Hessenberg and tridiagonalization are different. Their local step is expressed through similarity or unitary similarity, and the lifted object is a structured matrix rather than a pair of factors reconstructing the input directly. The distinction affects both the schema equation and the proof that the target survives transport along the chosen orthogonal or unitary transformation.

\begin{lstlisting}[caption={QR schema and orthogonal Hessenberg (\href{\repourl/MatDecompFormal/Instances/QR/Details.lean}{\texttt{Instances/QR/Details.lean}}; \href{\repourl/MatDecompFormal/Instances/OrthogonalHessenberg/Real.lean}{\texttt{Instances/OrthogonalHessenberg/Real.lean}})}]
def QR_Schema [LinearOrder ι] : DecompositionSchema ι ι R where
  Factors := Matrix ι ι R × Matrix ι ι R
  property := fun (Q, R') => IsOrthogonalMatrix Q ∧ IsUpperTriangular R'
  equation := fun A (Q, R') => A = Q * R'

def HasOrthogonalHessenberg (A : Matrix ι ι ℝ) : Prop :=
  ∃ Q : Matrix ι ι ℝ, ∃ H : Matrix ι ι ℝ, IsOrthogonalMatrix Q ∧ IsUpperHessenberg H ∧ A = Q * H * Qᵀ
\end{lstlisting}

The contrast in the listing is the main mathematical point of this family. QR proves a one-sided factorization theorem, whereas Orthogonal Hessenberg proves a similarity theorem. Tridiagonalization uses the same broad orthogonal or unitary setting, but its public target is implication-shaped because Hermitian structure is treated as part of the predicate. The difference among these theorems is therefore visible in the final proposition, not merely in whether a local step uses a Householder reflection or a similarity transformation. The QR lift and transport theorems give concrete examples of the corresponding proof obligations. Given a matrix $A$ that is QR-ready, meaning that its first column is already in Householder normal form, and a recursive QR witness for the tail submatrix, the lift produces a QR witness for $A$. The subsequent transport theorem absorbs a Householder left factor back into the orthogonal component.
\begin{lstlisting}[caption={QR recursive lift (\href{\repourl/MatDecompFormal/Instances/QR/Recursive.lean}{\texttt{Instances/QR/Recursive.lean}})}]
theorem qrReady_headTailSubmatrixLift (A : Matrix ι ι R) (hA : QRReady ι A) (hP : HasQR (A.submatrix (fun i : QRTailIdx ι => headTailEquiv.symm (Sum.inr i)) (fun j : QRTailIdx ι => headTailEquiv.symm (Sum.inr j)))) : HasQR A
\end{lstlisting}
\begin{lstlisting}[caption={QR transport (\href{\repourl/MatDecompFormal/Instances/QR/Recursive.lean}{\texttt{Instances/QR/Recursive.lean}})}]
theorem qr_transport_of_orthogonal_left (H A : Matrix ι ι R) (hH : IsOrthogonalMatrix H) (hQR : HasQR (H * A)) : HasQR A
\end{lstlisting}

Householder and Givens modules \cite{householder1958unitary,givens1958computation} refine the QR branch by adding product predicates such as \texttt{HasHouseholderQR} and \texttt{HasGivensQR}. These are stronger than the plain theorem \texttt{HasQR}, but they should not automatically be described as full numerical traces. A prior formalization of QR decomposition via Gram-Schmidt orthogonalization exists in Isabelle/HOL \cite{divason2015qr}; the present development uses Householder and Givens constructions instead. In the current code, there are explicit forgetful theorems from the stronger structured predicates back to \texttt{HasQR}, and this is the right way to describe the relation: the product-style variants retain more information about the orthogonal factor, while the basic QR theorem remains an existence theorem for an orthogonal factor and an upper-triangular factor.

Boundary-style reductions appear most clearly in Hessenberg \cite{wilkinson1965algebraic}, orthogonal/unitary Hessenberg, and tridiagonalization. Here the recursive invariant includes an active boundary column, so the local state is richer than an ordinary trailing submatrix. Tridiagonalization adds one more boundary condition: the square-driver target is implication-shaped, because the public theorem is meant for Hermitian matrices. This is a good example of statement design controlling reuse. The framework still proves one universe-level theorem, but the property encodes the precondition and determines which boundary data must be preserved by lifting. The shared driver handles descent; the instance remains responsible for the stronger invariant.

\subsection{Rectangular Reductions}

The rectangular branch is where the general indexed interface becomes essential rather than decorative. A recursive step may remove a row index, a column index, or both, and the lifted factors need not live over matching ambient types. This is why the framework uses \texttt{RectUniverse} and \texttt{RectStrategyData} instead of treating rectangular results as a square special case with padding. Gauss rank normal form is the cleanest example of this path. The theorem states a two-sided invertible equivalence to a data-oriented rank-normal-form matrix, and the recursive lift is correspondingly two-sided. Gauss is therefore a good representative of what the rectangular driver does without spectral or metric overhead.

\begin{lstlisting}[caption={Gauss rank normal form and SVD (\href{\repourl/MatDecompFormal/Instances/Gauss/Details.lean}{\texttt{Instances/Gauss/Details.lean}}; \href{\repourl/MatDecompFormal/Instances/SVD/Existence.lean}{\texttt{Instances/SVD/Existence.lean}})}]
def HasGaussRankNormalForm (A : Matrix m n R) : Prop :=
  ∃ P : Matrix m m R, ∃ Q : Matrix n n R, ∃ G : Matrix m n R, GaussInvertibleMatrix P ∧ GaussInvertibleMatrix Q ∧ IsGaussRankNormalForm G ∧ G = P * A * Q

theorem exists_svd (A : Matrix m n ℂ) : HasSVD A
\end{lstlisting}

The first declaration shows the cleanest rectangular theorem surface: two-sided invertible equivalence to a structured rectangular matrix. SVD strengthens that target by asking for unitary factors and a nonnegative diagonal middle matrix \cite{golub1965calculating}. Bidiagonalization changes the structured target again, replacing diagonal data by bidiagonal data. UTV keeps the same broad two-sided unitary form but uses a genuine rectangular upper-triangular, or upper-trapezoidal, middle factor. Its current one-step oracle is obtained from SVD head-basis data, so it is best described as a framework-routed UTV existence theorem rather than as a constructive UTV reduction algorithm. What unifies this family is not a single formula, but a common rectangular setting in which row and column structure are allowed to vary independently.

This distinction matters for the paper's claims. The rectangular branch shows that the driver handles independent row and column descent, but it does not imply that every rectangular theorem records the same kind of algorithmic data. Gauss is the clean driver instance. SVD and related unitary results are stronger existence theorems whose proofs may use spectral or noncomputable choices while still fitting the same rectangular interface. Their commonality lies in the shape of the recursive contract rather than in the computational source of each local witness. As a concrete example of a rectangular transport hook, the SVD transport absorbs two-sided unitary factors:
\begin{lstlisting}[caption={SVD transport (\href{\repourl/MatDecompFormal/Instances/SVD/Details.lean}{\texttt{Instances/SVD/Details.lean}})}]
theorem svd_transport_twoSidedUnitary (U : Matrix m m ℂ) (V : Matrix n n ℂ) (A B : Matrix m n ℂ) (hU : IsUnitaryMatrix U) (hV : IsUnitaryMatrix V) (hB : B = Uᴴ * A * V) (hSVD : HasSVD B) : HasSVD A
\end{lstlisting}

\subsection{Algebraic and Canonical Bridges}

The bridge branch uses the driver less as a step-by-step elimination engine and more as an interface boundary. The matrix theorem is still stated in the same schema style, but a large part of the mathematical work is delegated to PID, spectral, or canonical-form infrastructure. This is visible already in the public predicates: some record two-sided equivalence to ordered normal-form data, while others state similarity to a canonical representative whose existence is proved by a deeper bridge. In these cases the reusable matrix-facing interface organizes the final theorem, while the bridge supplies existence data that should not be described as a local executable reduction.

\begin{lstlisting}[caption={Smith normal form data and rational canonical form (\href{\repourl/MatDecompFormal/Instances/Smith/Details.lean}{\texttt{Instances/Smith/Details.lean}}; \href{\repourl/MatDecompFormal/Instances/RationalCanonical/Details.lean}{\texttt{Instances/RationalCanonical/Details.lean}})}]
structure SmithNormalFormData (D : Matrix m n R) where
  r : Type*
  order : Fin (Fintype.card r) ≃ r
  -- (additional fields: row, col, diag, injectivity,
  --  entry constraints omitted for brevity)
  divides_chain : ∀ k : Fin (Fintype.card r), (hnext : (k : Nat) + 1 < Fintype.card r) → diag (order k) ∣ diag (order ⟨(k : Nat) + 1, hnext⟩)

def HasRationalCanonical (A : Matrix ι ι K) : Prop :=
  ∃ P : Matrix ι ι K, ∃ Pinv : Matrix ι ι K, ∃ C : Matrix ι ι K, HasMatrixInverse P Pinv ∧ IsRationalCanonicalMatrix C ∧ A = P * C * Pinv
\end{lstlisting}

The first declaration shows that the Smith target carries ordered invariant-factor data through an explicit ordering and adjacent divisibility chain. The second declaration shows the shape of a canonical-form theorem in this development: a change of basis, an explicit inverse witness, and a canonical matrix satisfying its own structural predicate. These are not elimination theorems in the narrow sense. They are matrix-facing statements that sit at the boundary between the common decomposition language and imported algebraic structure. Consequently, their proof route is classified as bridge-derived even though their final statements use the same schema vocabulary as the explicit recursive instances.

Smith normal form \cite{smith1861systems} sits near the rectangular side of the library because it is built from two-sided row and column equivalence. Its current statement boundary is still delicate: \breaktt{IsSmithNormalForm} is data-oriented and records an ordered divisibility chain, but the theorem does not claim uniqueness, normalization of associates, or a constructive Smith elimination trajectory. Rational canonical form and Jordan-type theorems \cite{horn2012matrix,thiemann2016jordan} are square similarity statements, but most of their proof burden lies in block strategies and canonical-form bridges rather than in local elimination. The resulting proof object should therefore be described as a structured existence proof, not as an executable canonical-form algorithm. These bridge instances are still important evidence for the framework, but in a different way from PLU or QR. Their main interest in the present section is the kind of theorem they state: matrix-level existence results whose public predicates remain sensitive to how much bridge structure is imported. The next section turns from these family differences to the common assembly route itself.

\section{From Framework to Decomposition Theorems}
\label{sec:plu-case}

Section~\ref{sec:driver-patterns} compared theorem families by their public statements. We now explain how the common framework produces such theorems. The key point is that the reusable part is not a single decomposition formula, but an assembly path from a public target predicate to strategy data, proof hooks, a generic induction instance, and finally a theorem for an arbitrary matrix. We use PLU as the main worked example because it shows the full square assembly path most transparently, and then indicate how the same route changes in QR, Gauss, and bridge-heavy square instances without conflating their distinct proof routes and targets.

PLU remains the most informative worked example because it exercises almost every moving part of the square driver while still having a familiar mathematical statement. For a square matrix over a division ring, the theorem produces matrices $P$, $L$, and $U$ such that $P A=L U$, where $P$ is a permutation matrix, $L$ is unit lower triangular, and $U$ is upper triangular. Unlike LU or plain QR, PLU also forces a genuine branch in the reachability step: the recursive path depends on whether the first column is already zero or whether a pivot must first be moved to the head position. This makes PLU the clearest place to see how the abstract framework absorbs a branching recursive proof rather than a single fixed reduction. Its assembly begins with a public theorem surface and a universe-level target predicate. The type-indexed schema chooses a triple of square matrices as factors, records the permutation and triangularity predicates as the property field, and records $P A=L U$ as the equation field. The universe-level predicate then lifts that matrix theorem to the square universe on which the generic induction machinery operates.

\begin{lstlisting}[caption={PLU schema and universe predicate (\href{\repourl/MatDecompFormal/Instances/PLU/Details.lean}{\texttt{Instances/PLU/Details.lean}})}]
def PLU_Schema : DecompositionSchema ι ι R where
  Factors := Matrix ι ι R × Matrix ι ι R × Matrix ι ι R
  property := fun (P, L, U) => IsPermutation P ∧ IsUnitLowerTriangular L ∧ IsUpperTriangular U
  equation := fun A (P, L, U) => P * A = L * U

def PLU_P (x : SquareUniverse R) : Prop := HasPLU x.A
\end{lstlisting}

This distinction matters because the recursive slice lives over a tail index type, while the final theorem must return to the original matrix type after lifting and transport. The framework therefore does not recurse directly on a theorem of the form \lean{forall A, HasPLU A}. It recurses on a universe-level predicate, and the matrix theorem is recovered only at the end of the assembly. The next layer is the strategy layer, where the framework separates generic structure from instance-specific proof data. The head-tail strategy core fixes how the matrix is prepared, what counts as a sliceable state, and how the recursive slice is extracted. The PLU-specific record does not repeat that machinery; it supplies only the theorem-facing hooks for transport along the local relation and lifting a recursive slice witness back to the prepared matrix. These hooks are packaged once as strategy data.
\begin{lstlisting}[caption={PLU strategy data (\href{\repourl/MatDecompFormal/Instances/PLU/Driver.lean}{\texttt{Instances/PLU/Driver.lean}})}]
noncomputable def plu_strategy_proof :
    SquareStrategyProofData R PLU_P plu_strategy_core where
  transport := ...
  lift := ...

noncomputable def plu_strategy_data : SquareStrategyData R PLU_P :=
  mkSquareStrategyData plu_strategy_core plu_strategy_proof
\end{lstlisting}
The first field records the two PLU-specific obligations that the driver cannot synthesize by itself. The transport hook explains how a PLU witness is pulled back along the local relation used by the strategy core. The lift hook explains how a PLU witness for the recursive slice is enlarged to a PLU witness for the prepared matrix. Reachability and descent are then derived from the strategy core, while theorem reconstruction is handled by the generic square assembler. This division keeps the branching pivot algebra in the instance and prevents the generic driver from depending on a particular factorization equation or readiness predicate.

From the framework's viewpoint, the heart of the PLU example is the branching reachability obligation. The generic square interface does not ask for a proof of PLU in one step. It asks for the following data: given a non-base positive square universe, exhibit a related state that is sliceable and whose slice has smaller measure. In PLU this obligation is discharged by the textbook dichotomy. If the first column already vanishes below the head position, the matrix is ready for a tail-block reduction. Otherwise a pivot is available somewhere in that column; a row swap moves it to the head position, after which elimination produces a Schur-type recursive state. The two branches therefore differ in local algebra, but they satisfy the same abstract contract with the framework: reach a prepared matrix, prove strict descent, and expose a recursive slice.

The zero-column branch is the simpler of the two, and it shows clearly what the lift hook is for. When the first column is already zero, the recursive slice is just the trailing block. The reachability relation may then be realized by the identity step, so transport adds no new algebraic work. The substantive obligation is the lift theorem: starting from a PLU decomposition of the tail block, one must embed the smaller permutation, lower-triangular, and upper-triangular factors into the lower-right corner of larger block matrices and verify the full equation $P A=L U$. This is also where the reindexing layer from Section~\ref{sec:core-tech} becomes visible. The recursive witness lives over the tail index type, but the lifted factors must satisfy triangular predicates on the original square index type, so the proof passes through explicit head-tail equivalences and their order-preserving variants.

The pivot branch is structurally richer, and it is the part of PLU that most clearly exhibits the separation between lift and transport. Suppose the first column is not already zero. Reachability chooses an index $t$ of a nonzero pivot and forms the swapped matrix $B=(\mathrm{swap}_{0t})A$. At the level of the driver relation, this is the only transport information that matters: $B$ is locally related to $A$, and the recursive slice extracted from $B$ will be smaller than $A$. The PLU-specific algebra then proves more. After the swap, elimination clears the entries below the pivot, so the prepared matrix has a block form whose trailing block is the Schur complement. The recursive premise is therefore a PLU theorem for that Schur complement, not for the swapped matrix itself. The lift theorem reconstructs a PLU witness for the prepared matrix from the recursive witness on the Schur complement. Only after that does transport act: the helper theorem \texttt{hasPLU\_of\_left\_swap} absorbs the initial row swap into the permutation factor, converting a PLU decomposition of $(\mathrm{swap}_{0t})A$ into one of $A$.

This division of labor is important. The lift theorem is responsible for the block algebra that combines elimination data with the recursively obtained factors. Transport is responsible only for moving the theorem across the local relation used by reachability. In the zero-column branch the local relation can be equality, so transport is trivial. In the pivot branch the local relation is a single left row swap, so transport changes the permutation factor but leaves the rest of the reconstructed witness intact. The same abstract driver can accommodate both behaviors because it only requires a local relation and a proof that the target predicate is stable under it.

The base case is equally explicit. On a subsingleton or zero-dimensional index type, the witness is the trivial triple $(1,1,A)$. Formally this appears in two places. The public theorem uses \texttt{base\_plu\_subsingleton} to dispatch the degenerate case for an arbitrary square matrix. The generic assembler consumes a universe-level theorem \texttt{plu\_base\_univ}, which phrases the same idea on \texttt{SquareUniverse}. This matters because the generic recursion is not an induction on a natural parameter alone. The base test is performed on packaged matrices whose index types may vary, so the framework must recognize base objects through the universe measure rather than through a literal occurrence of \texttt{Fin 0}. The top-level theorem is short precisely because all local obligations have already been packaged as strategy data.

\begin{lstlisting}[caption={PLU existence theorem (\href{\repourl/MatDecompFormal/Instances/PLU/Existence.lean}{\texttt{Instances/PLU/Existence.lean}})}]
theorem exists_plu_decomposition (A : Matrix ι ι R) : HasPLU A := by
  by_cases h_sub : Subsingleton ι
  · exact base_plu_subsingleton A
  · let stepData : SquareStrategyData R PLU_P := plu_strategy_data
    let inst : SquareSubtypeInductionInstance R :=
      mkSquareSubtypeInductionInstanceFromStrategy PLU_P plu_base_univ stepData
    have hP : inst.P (SquareUniverse.ofMatrix A) :=
      SquareSubtypeInductionInstance.prove_for_matrix (inst := inst) A
    exact hP
\end{lstlisting}

This listing is the code-level expression of the assembly path. The theorem first isolates the subsingleton case because that case is mathematically trivial and does not require recursion. In the nontrivial case, \texttt{plu\_strategy\_data} packages the branching reachability theorem, the strict-descent proof, and the PLU-specific lift and transport hooks. The constructor \href{\repourl/MatDecompFormal/Framework/DecompositionDriver.lean}{\breaktt{mkSquareSubtypeInductionInstanceFromStrategy}} turns those local ingredients into a value of the generic subtype-induction interface. The theorem \href{\repourl/MatDecompFormal/Framework/UniverseDecomposition.lean}{\breaktt{SquareSubtypeInductionInstance.prove_for_matrix}} then applies the reusable recursive argument to the concrete matrix. The final theorem no longer mentions row swaps, elimination matrices, Schur complements, or head-tail reindexings because those details have already been absorbed into the strategy data and the generic constructor.

At this point the reusable pipeline is visible, and PLU is only one instance of it. In QR, the universe-level target changes from \texttt{HasPLU} to \texttt{HasQR}, and the lift and transport hooks prove preservation of orthogonality and upper triangularity rather than permutation and unit lower triangularity. In LDL and Cholesky, the square route first produces strengthened LDL data and then converts it to a positive-diagonal Cholesky factor. In Gauss rank normal form and UTV, the same route passes through \texttt{RectUniverse} and \texttt{RectStrategyData}, and the public theorem becomes a two-sided statement over independent row and column index types. In unitary Schur, rational canonical, and Jordan-type developments, the square shell persists, but the strategy hooks interface with spectral, block, or canonical-form bridges. What is reused across all these cases is the passage from theorem surface, target predicate, strategy data, and generic assembler to a public theorem. What changes is the local mathematics required to supply those ingredients.

PLU is therefore representative for the framework in a precise but limited sense. It is the clearest square example in which reachability genuinely branches, lifting requires explicit block algebra, and transport changes the final witness in a nontrivial but transparent way. It does not stand for the whole instance library, but it does exhibit the full square assembly path from schema and universe-level predicate to strategy data, generic construction, and final theorem. PLU therefore remains the best worked example for explaining how the framework produces decomposition theorems while leaving the spectral and bridge-derived routes to the broader comparative discussion that follows.

\section{Statement Boundaries}
\label{sec:statement-boundaries}

The instance library should not be read as a list of uniformly computational algorithms or uniformly strong theorem statements. The common driver packages a shared recursive proof shape, not a promise that every public predicate records the same amount of step data or matches the strongest classical theorem associated with its file name. Some instances are close to explicit elimination proofs, some record only an existence route compatible with the same driver interface, and some use bridge theorems whose mathematical content is stronger than any local elimination step. Table~\ref{tab:instance-landscape} summarizes the main groups together with the most important statement boundary in each group.

\begin{table}[t]
\centering
\caption{Representative, non-exclusive instance groups and their principal statement boundaries.}
\label{tab:instance-landscape}
\begin{tabular}{@{}
  >{\raggedright\arraybackslash}p{0.23\textwidth}
  >{\raggedright\arraybackslash}p{0.24\textwidth}
  >{\raggedright\arraybackslash}p{0.33\textwidth}
  @{}}
\toprule
Family / proof pattern & Representative examples & Principal statement boundary \\
\midrule
Square elimination and triangularization & PLU, LU, LDL, Cholesky & Readiness hypotheses and bridge-derived targets differ across the group \\
\addlinespace[2pt]
Orthogonal, unitary, and boundary reductions & QR variants, Hessenberg variants, Schur, normal spectral decomposition, tridiagonalization & Plain existence, implication-shaped targets, and structured variants retain different data \\
\addlinespace[2pt]
Rectangular two-sided reductions & Gauss rank normal form, SVD, bidiagonalization, UTV & Independent row/column descent; spectral and bridge routes are not executable traces \\
\addlinespace[2pt]
Canonical and algebraic bridges & Smith normal form, rational canonical form, Jordan-type forms & Existence metadata without a general claim of uniqueness, normalization, or executable reduction \\
\bottomrule
\end{tabular}
\end{table}

The first distinction is between existence, proof path, and execution. A theorem such as PLU or the basic QR theorem records an existence statement together with a proof route that is compatible with explicit local transformations. A theorem phrased through Householder or Givens product predicates may retain more structure about the orthogonal factor, but it still should not be confused with a fully executed numerical routine unless the step data is part of the final witness. At the other extreme, a theorem proved through a spectral theorem, PID bridge, canonical-form bridge, or classical choice may still fit the same driver architecture, but the resulting proof object records an existence route rather than a concrete numerical trace. The common interface is therefore stable across these examples, but the meaning of the final witness varies with the target predicate and with the local mathematics used to supply it.

The second distinction is between theorem names and actual predicates. Lean proves exactly the proposition that has been stated, not the name that a file happens to suggest. This is the point at which statement design becomes a mathematical issue rather than a presentation detail, because a weak or data-oriented predicate may still support a correct theorem while falling short of the classical theorem one might first expect from the surrounding terminology. The following three definitions illustrate different boundary phenomena: UTV uses a rectangular upper-triangular middle factor obtained through SVD-derived head-basis data; Schur triangularization has a separate complex unitary target rather than overloading algebraic invertible similarity; and the tridiagonalization driver places its Hermitian hypothesis inside an implication-shaped property.

\begin{lstlisting}[caption={Statement boundary examples (\href{\repourl/MatDecompFormal/Instances/UTV/Details.lean}{\texttt{Instances/UTV/Details.lean}}; \href{\repourl/MatDecompFormal/Instances/Schur/Details.lean}{\texttt{Instances/Schur/Details.lean}}; \href{\repourl/MatDecompFormal/Instances/Tridiagonalization/Details.lean}{\texttt{Instances/Tridiagonalization/Details.lean}})}]
def IsRectangularUpperTriangular (T : Matrix m n R) : Prop :=
  ∀ i j, colRank n j < rowRank m i → T i j = 0

def HasUnitarySchur (A : Matrix ι ι ℂ) : Prop :=
  ∃ Q : Matrix ι ι ℂ, ∃ T : Matrix ι ι ℂ, IsUnitaryMatrix Q ∧ IsUpperTriangular T ∧ A = Q * T * Qᴴ

def Tridiagonalization_P (x : SquareUniverse ℂ) : Prop :=
  x.A.IsHermitian → HasUnitaryTridiagonalization x.A
\end{lstlisting}

Returning to the second distinction, several boundaries concern names whose mathematical meaning depends on the exact predicate. Cholesky is now stated as a positive-diagonal lower-triangular Cholesky target and is derived from strengthened LDL data, so it should be described as a Cholesky theorem obtained through an LDL bridge, not as an LDL-only statement. UTV now has a rectangular upper-triangular middle factor, so the remaining caveat is not the target predicate but the proof route: the current existence theorem is assembled from an SVD-derived one-step oracle rather than from a constructive UTV reduction trace. Smith normal form introduces another kind of boundary. The current predicate records ordered invariant-factor data and adjacent divisibility, but the theorem remains a data-oriented existence result; it does not claim uniqueness of the invariant factors up to associates, a chosen normalization convention, or an executable Smith reduction algorithm.

Returning to the first distinction, a further group of boundaries concerns how much local transformation data survives into the final witness. Product-style predicates such as \texttt{HasHouseholderQR} and \texttt{HasGivensQR} retain more structure than the plain theorem \texttt{HasQR}, but they should still be described through the predicate actually proved rather than through an informal promise that every elementary reflector or rotation has been stored as an executed numerical trace. The same caution applies to bidiagonalization and tridiagonalization. Their public theorem surfaces are valid and useful, but a proof that passes through spectral or bridge-style infrastructure should not automatically be read as a formalization of a specific Golub--Kahan or Householder run. Rational canonical and Jordan theorems are stronger examples of this phenomenon: they prove substantial canonical-form existence results, yet much of the mathematical burden lies in imported block and canonical-form bridges rather than in a local elimination routine.

The same care is needed elsewhere. \texttt{HasSchur} is an invertible-similarity triangularization theorem, while \texttt{HasUnitarySchur} is the separate complex unitary theorem. Both \breaktt{Tridiagonalization_P} and \texttt{NormalSpectral\_P} are implication-shaped: the Hermitian or normality hypothesis is carried inside the target predicate, so the theorem is vacuously true when the hypothesis does not hold. Product-style QR, bidiagonalization, and Hessenberg variants expose product or boundary data, but those predicates still need to be read as the data they state, not as claims that a complete numerical execution log has been extracted. Which predicate is proved determines what the final theorem really says, what kind of witness later automation may reuse, and how one should compare the formal result with its textbook namesake.

Seen from this perspective, statement boundary is not a minor editorial issue appended after the proofs are done. It is one of the central research outcomes of the development. The reusable interface determines which recursive arguments can be shared, but the public predicate determines what theorem is actually obtained from that interface. If the predicate is intentionally weak, Lean faithfully proves a weak theorem. If the predicate is strengthened, the lift, transport, and bridge obligations become correspondingly stronger. This is why theorem design and proof reuse have to be discussed together: the framework tells us how to prove a large family of decomposition theorems, but the predicates decide how close those theorems come to the classical mathematical statements one wants to formalize.

\section{Quantitative Overview}
\label{sec:quantitative}

Table~\ref{tab:quantitative} summarizes the development in terms of physical source lines, proof route, target form, and framework reuse. All counts use the fixed 17-family audit scope reported here. LOC counts physical Lean source lines, including comments and blank lines; direct shared imports count distinct Framework, Components, and Abstractions modules imported by each family. The proof-route axis identifies whether the principal theorem is assembled through explicit recursive steps, spectral existence results, or another decomposition or algebraic bridge. The target-form axis separately records the hypothesis shape of the public theorem and whether structured variants expose additional witness data. The counted scope comprises 37{,}337 physical lines of Lean~4 code across 118 files: 34{,}503 lines in the decomposition instances and 2{,}834 lines in the shared framework, abstractions, and components. A lexical source scan, instead of a Lean declaration query, finds 1{,}162 proofs (693 theorems and 469 lemmas), 725 definitions (516 of which are noncomputable), 128 structures, and 84 abbreviations under this scope across all counted files.

\begin{table}[!htbp]
\centering
\caption{Instance landscape: physical lines of code, proof route, target form, and framework reuse.}
\label{tab:quantitative}
\footnotesize
\setlength{\tabcolsep}{3pt}
\begin{tabular}{@{}lr>{\raggedright\arraybackslash}p{0.15\textwidth}>{\raggedright\arraybackslash}p{0.17\textwidth}c@{}}
\toprule
Instance & LOC & Proof route & Target form & \shortstack{Direct shared\\imports} \\
\midrule
PLU & 982 & Explicit recursive & Unconditional & 11 \\
LU & 1{,}469 & Explicit recursive & Conditional & 8 \\
LDL & 691 & Explicit recursive & Conditional & 7 \\
Cholesky & 168 & Bridge-derived & Conditional & 3\tnote{a} \\
\addlinespace[2pt]
QR (+ Householder, Givens) & 2{,}946 & Explicit recursive & Unconditional; structured variants & 10 \\
Hessenberg & 1{,}930 & Explicit recursive & Unconditional & 4 \\
Orth./Unit.\ Hessenberg & 2{,}980 & Explicit recursive & Unconditional; structured variants & 0\tnote{b} \\
Schur (algebraic + unitary) & 1{,}442 & Spectral existence & Unconditional & 7 \\
Normal Spectral & 1{,}683 & Spectral existence & Implication-shaped & 3 \\
Tridiagonalization & 2{,}160 & Spectral / explicit variants & Implication-shaped; structured variants & 3 \\
\addlinespace[2pt]
Gauss rank normal form & 1{,}142 & Explicit recursive & Unconditional & 3 \\
SVD & 1{,}688 & Spectral existence & Unconditional & 3 \\
Bidiagonalization & 2{,}344 & Spectral existence & Unconditional; structured variants & 4 \\
UTV & 831 & Bridge-derived & Unconditional & 0\tnote{b} \\
\addlinespace[2pt]
Smith normal form & 2{,}728 & Bridge-derived & Unconditional & 3 \\
Rational canonical form & 3{,}681 & Bridge-derived & Unconditional & 5 \\
Jordan-type forms & 5{,}638 & Bridge-derived & Conditional / unconditional variants & 4 \\
\midrule
Shared infrastructure & 2{,}834 & --- & --- & --- \\
\textbf{Total} & \textbf{37{,}337} & & & \\
\bottomrule
\end{tabular}
\begin{tablenotes}
\footnotesize
\item[a] Cholesky reuses LDL induction and has no direct Framework dependency.
\item[b] These families call the driver but obtain framework types through sibling-instance imports.
\end{tablenotes}
\end{table}

The two classification columns separate facts that the earlier single-axis terminology conflated. \emph{Explicit recursive} means that concrete local steps, slices, lifting, and transport enter the driver; it does not assert constructive logic or executable computation. \emph{Spectral existence} marks a route whose local existence data comes primarily from spectral results or noncomputable spectral choices. \emph{Bridge-derived} marks a theorem obtained from another decomposition, as with Cholesky and UTV, or from algebraic and canonical-form bridges, as with Smith, rational canonical, and Jordan forms. The target-form column separately records whether the principal theorem is unconditional, conditional on an explicit hypothesis, implication-shaped, or accompanied by variants whose final witnesses retain structured trace data. In particular, bidiagonalization is classified by its spectral existence route even though separate product and trace predicates expose additional structure. The shared infrastructure of 2{,}834 lines (about 7.6\% of the counted total) has a much larger architectural footprint than its size suggests, because it is reused across the family library rather than consumed once.

At the engine level, the two most central modules---\texttt{DecompositionDriver} (699~lines) and \texttt{HeadTail} (193~lines)---are each directly imported by 14 of the 17 instance families. The matrix-level entry \texttt{prove\_for\_matrix} occurs in 15 families; OrthogonalHessenberg receives and instantiates the relevant framework types transitively, while Cholesky inherits recursion from LDL. The two engine modules therefore provide 892 lines of the recursive backbone used throughout the library. At the component level, small reusable modules achieve high fan-out: \texttt{SubmatrixMethod} (69~lines) is directly imported by 11 families, \texttt{Properties.Triangular} (144~lines) by~6, and \texttt{Properties.Reindex} (159~lines) by~5. The ``Direct shared imports'' column in Table~\ref{tab:quantitative} records how many shared modules each family directly imports; the most coupled families (PLU~11, QR~10, LU~8) draw on the majority of the shared layer.

Beyond callable APIs, the framework imposes a recurring proof organization: all 17 families contain a \texttt{Details} module, 16 contain \texttt{Strategy} and \texttt{Existence} modules, and 14 contain a \texttt{Direct} module. The dominant workflow is to define the schema, fill strategy data, assemble the driver, and export a theorem; bridge-only families may inherit part of this workflow from another instance. The final export step is correspondingly small: 13 of the 16 families with a dedicated \texttt{Existence} module keep that module below 200 physical lines, including 32 lines for PLU and 44 for LDL, even when the supporting decomposition-specific algebra runs to thousands of lines. Normal, Schur, and Jordan exceed that threshold because their existence modules retain more spectral or canonical-form bridge material. This organization is a concrete payoff of the shared recursive backbone: once an instance supplies its local algebra and lift/transport hooks, the cross-type recursion does not have to be re-derived.

Three instances warrant specific comment on their framework coupling. Cholesky (note~a in Table~\ref{tab:quantitative}) is a pure algebraic bridge: it invokes the LDL existence theorem---which internally uses the full framework induction---and then applies a one-step diagonal square-root transformation. It needs \texttt{DecompositionSchema} from the Abstractions layer and property lemmas from Components, but no direct Framework import; the recursive complexity is borne entirely by LDL. Orthogonal/Unitary Hessenberg and UTV are marked by note~b because they receive framework types transitively through sibling-instance imports (Hessenberg and SVD, respectively) rather than through direct shared-module imports. Their own code nevertheless constructs induction instances and fills the strategy, lift, and transport data---UTV calling \texttt{prove\_for\_matrix} directly, while Orthogonal/Unitary Hessenberg drives the shared boundary-column subtype descent---so the transitive import is a Lean module-system artifact, not a weaker conceptual coupling to the framework. Individual instances range from 168~lines (Cholesky) to 5{,}638~lines (Jordan), while the architecture used to organize their proof routes is supplied by the shared layer.

\FloatBarrier
\section{Discussion and Future Work}
\label{sec:discussion}

The results support a precise view of generalization and reuse. When several theorems follow the same sequence of local preparation, strict descent, recursive application, lifting, and transport, the repeated routine should itself be stated as a more general theorem rather than copied as a proof script. Here that theorem-level interface combines universe packaging, strategy data, and proof hooks, while each instance supplies its own algebra and public predicate. The design principle of packaging algebraic structure into reusable interfaces \cite{garillot2009packaging} is thereby extended from individual hierarchies to families of decomposition proofs. In square elimination, the common interface packages head-tail recursion and block lifting; in rectangular problems, it accommodates independent row and column descent; and in bridge-heavy canonical forms, it supplies a matrix-facing theorem surface for deeper algebraic arguments. The quantitative evidence in Section~\ref{sec:quantitative} shows that this shared organization is used across the library even though the source of the final witnesses varies substantially.

The same comparison also shows why statement design is more important than the apparent difficulty of a proof script. One driver can support unconditional factorizations, implication-shaped results, structured trace variants, and bridge-derived existence theorems, but these results do not justify identical algorithmic claims. Explicit local data may certify a route from the original matrix to a recursive slice and back, whereas spectral, PID, or canonical-form bridges generally provide structured existence without an executable numerical procedure. The target predicate must therefore specify the factors, preserved properties, recursive domain, transport relation, and retained step data before automation is applied. Once this interface is mathematically appropriate, many remaining obligations become local lift, transport, readiness, and block-identity lemmas. Reusable tactics and AI-assisted proof search can then help discharge those obligations, but they do not replace the human task of choosing the theorem boundary. Section~\ref{sec:statement-boundaries} records these boundaries because the formal statement, rather than the theorem name or the length of its proof, determines what has actually been established.

The framework also has clear computational and mathematical scope boundaries. It targets decomposition proofs organized as head-tail recursive reductions, where each step removes one or more index positions and the recursive call concerns a strictly smaller matrix. This pattern covers elimination-based proofs, orthogonal reductions, and canonical-form arguments connected through algebraic bridges. It does not cover convergence-based algorithms such as QR iteration or Jacobi iteration, whose correctness depends on asymptotic analysis rather than dimension reduction, and randomized decompositions are likewise outside the current scope. Moreover, the present constructions use \texttt{Classical.choice} and are noncomputable. This choice fits the abstract interfaces and theorem libraries available in \mathlib and keeps the mathematical statements independent of implementation details, but it also means that the resulting witnesses are not directly executable numerical code. Executable extraction would require an additional refinement layer, as in CoqEAL \cite{denes2012refinement}, together with computable local searches and suitable data representations.

Future work should first separate the layer that records existence proof paths from a refinement layer that records executable algorithms, allowing both forms of result to coexist without conflating proof organization with numerical implementation. A second direction is to strengthen canonical metadata where the present predicates deliberately stop at existence. Relevant examples include uniqueness or ordering conventions for SVD data, normalization up to associates for Smith invariant factors, and richer block metadata for rational canonical and Jordan forms. A third direction concerns proof engineering: improved automation for block identities, reindexing, order transport, and structural predicates would reduce the decomposition-specific cost of new instances. These extensions would test whether the current interfaces remain stable as theorem statements become stronger and more computational. The present development does not settle those questions, but it establishes a common recursive foundation against which stronger statements, executable refinements, and automated proof support can be compared.

\bibliographystyle{unsrt}
\bibliography{bibliography}

\end{document}